\theoremstyle{plain}
\newtheorem{thm}{Theorem}[section]
\newtheorem{cor}[thm]{Corollary}
\newtheorem{lem}[thm]{Lemma}
\newtheorem{prop}[thm]{Proposition}
\newtheorem{defn}[thm]{Definition}
\theoremstyle{definition}
\newtheorem{rem}[thm]{Remark}
\newtheorem{ex}[thm]{Example}
\numberwithin{equation}{section}
\newtheorem*{thmA}{Theorem}
\newcommand{\overbar}[1]{\mkern1.5mu\overline{\mkern-1.5mu#1\mkern-1.5mu}\mkern 1.5mu}
\newenvironment{psmallmatrix}
 {\left(\begin{smallmatrix}}
 {\end{smallmatrix}\right)}
\newcommand{{\ran}}{\mbox{\rm ran}~}
\begin{document}
\title[Geometric invariants for submodules of analytic Hilbert modules]{Geometric invariants for a class of submodules of analytic Hilbert modules via the sheaf model}

\author[S. Biswas]{Shibananda Biswas}
\address[S. Biswas]{Department of Mathematics and Statistics, Indian
Institute of Science Education and Research Kolkata, Mohanpur 741246, Nadia, West
Bengal, India}
\email[S. Biswas]{shibananda@iiserkol.ac.in}
\author[G. Misra]{Gadadhar Misra}
\address[G. Misra]{Statistics and Mathematics Unit, Indian Statistical Institute, Bangalore 560059, and Department of Mathematics, Indian Institute of Technology, Gandhinagar 382055}
\email[G. Misra]{gm@isibang.ac.in}

\author[S. Sen]{Samrat Sen}
\address[S. Sen]{4L B.G.Bye Lane, Naktala, Kolkata 700047}
\email[S. Sen]{samratsen@alum.iisc.ac.in}

\begin{abstract}
Let $\Omega \subseteq \mathbb C^m$ be a bounded connected open set and $\mathcal H \subseteq \mathcal O(\Omega)$ be an analytic Hilbert module, i.e., the Hilbert space $\mathcal H$ possesses a reproducing kernel $K$, the polynomial ring $\mathbb C[\boldsymbol{z}]\subseteq \mathcal H$ is dense and the point-wise multiplication induced by $p\in  \mathbb C[\boldsymbol{z}]$ is bounded on $\mathcal H$. We fix an ideal $\mathcal I \subseteq \mathbb C[\boldsymbol{z}]$ generated by $p_1,\ldots,p_t$ and let $[\mathcal I]$ denote the completion of $\mathcal I$ in $\mathcal H$. The sheaf $\mathcal S^\mathcal H$ associated to analytic Hilbert module $\mathcal H$ is the sheaf $\mathcal O(\Omega)$ of holomorphic functions on $\Omega$ and hence is free. However, the subsheaf $\mathcal S^{\mathcal [\mathcal I]}$ associated to $[\mathcal I]$ is coherent and not necessarily locally free. 
Building on the earlier work of \cite{BMP},  we prescribe a hermitian structure for a coherent sheaf and use it to find tractable invariants. Moreover,  we prove that if the zero set $V_{[\mathcal I]}$ is a submanifold of codimension $t$, then there is a unique local decomposition for the kernel $K_{[\mathcal I]}$ along the zero set that serves as a holomorphic frame for a vector bundle on $V_{[\mathcal I]}$. The complex geometric invariants of this vector bundle are also unitary invariants for the submodule $[\mathcal I] \subseteq \mathcal H$.

\end{abstract}

\thanks{The second-named author would like to acknowledge funding through the J C Bose National Fellowship and the MATRICS grant for his research.}
\thanks{A number of the results presented in this paper are from the PhD thesis of the third named author submitted to the Indian Institute of Science, Bangalore}
\subjclass[2020]{Primary: 47B13, 47B32, 47B35, 32A10, 32A36, 32A38}

\keywords{Hilbert module, reproducing kernel function, Analytic Hilbert module, submodule, resolution, holomorphic Hermitian vector bundle, coherent sheaf, linear space}
\dedicatory{Dedicated to the memory of J\"{o}rg Eschmeier}

\maketitle
\section{Introduction}
The notion of a Hilbert module over a function algebra was introduced by R. G. Douglas in the late eighties. Over the past couple of decades,   problems of multi-variate operator theory have been discussed using the language of Hilbert modules. In this paper,  we continue this tradition.  Let us begin by setting up some conventions that will be in force throughout.
\begin{enumerate}
\item $\mathbb C[\boldsymbol{z}]:=\mathbb C[z_1, \ldots , z_m]$ is the polynomial ring in $m$ variables.
\item $\Omega \subseteq \mathbb C^m$ is a bounded domain.
\item $\mathcal O(\Omega)$ is the ring of holomorphic functions on $\Omega$.
\item $\mathcal H$ is a complex separable Hilbert space and $\mathcal L(\mathcal H)$ is the algebra of bounded linear operators on $\mathcal H$.
\end{enumerate}
First, we recall some useful notions and terminology that we would be using through out this paper. 
For $1\leq i \leq m$, let $T_i:\mathcal H \to \mathcal H$ be a  commuting set of bounded linear operators on a Hilbert space $\mathcal H$ and let $\boldsymbol{T}$ denote the commuting tuple $(T_1, \ldots , T_m)$.
For a polynomial $p\in \mathbb C[\boldsymbol{z}]$, the map $p \to m_p:=p(T)$ defines a homomorphism from $\mathbb C[\boldsymbol{z}]$ to $\mathcal L(\mathcal H)$. Thus the map $(p,h) \to p(T) h$, $h \in \mathcal H$, defines module multiplication on the  Hilbert space $\mathcal H$ over the polynomial ring  $\mathbb C[\boldsymbol{z}]$. 

The Hilbert space $\mathcal H$ equipped with this multiplication is said to be a {\sf \emph{Hilbert module}} over the polynomial ring $\mathbb C[\boldsymbol{z}]$.

A closed subspace $\mathcal H_0 \subseteq \mathcal H$ is said to be a {\sf \emph{submodule}} of $\mathcal H$ if it is invariant under the module multiplication, i.e., $m_p f \in \mathcal H_0$ for all $f\in \mathcal H_0$. The {\sf \emph{quotient module}} $\mathcal Q$ is the quotient space $\mathcal H / {\mathcal H_0}$, which is the ortho-complement of $\mathcal H_0$ in $\mathcal H$. The module multiplication on this space is defined by compression of the multiplication on $\mathcal H$ to $\mathcal Q$, i.e., $\mathfrak m_p f = P_{\mathcal H_0^\perp}\big ( m_p f \big )$, $f\in \mathcal Q$.

(The original definition of the Hilbert module required the module map to be continuous in both the variables, however, here we dispense with this requirement.)

Two Hilbert modules $\mathcal H$ and $\tilde{\mathcal H}$ are said to be ``unitarily'' equivalent if there exists a unitary module map $\theta: \mathcal H \to \tilde{\mathcal H}$ intertwining the module maps, $m$ and $\tilde{m}$, that is, $ \tilde{m}_p \theta = \theta m_p$.

\begin{defn}
A Hilbert module $\mathcal H\subseteq \mathcal O(\Omega)$ over the polynomial ring $\mathbb C[\boldsymbol{z}]$
is said to be an {\sf \emph{analytic Hilbert module}} if it possesses a reproducing kernel $K$ and the polynomial ring $\mathbb C[\underline{z}]$ is included in $\mathcal H$ and it is dense. In particular, $K(w,w) \not = 0$, $w\in \Omega$.
\end{defn}

Fix an ideal $\mathcal I \subseteq \mathbb C[\boldsymbol{z}]$ generated by $p_1,\ldots,p_t$ and let $[\mathcal I]$ denote the completion of $\mathcal I$ in $\mathcal H$. 
Thus
\[
\begin{tikzcd}
0 \arrow{r} &\mbox{[}.  
\mathcal I \mbox{]}
\arrow{r}
& {\mathcal H} \arrow{r}
& \mathcal Q \arrow{r}& 0 ,
\end{tikzcd} \]
is a short exact sequence of Hilbert modules,
where $\mathcal Q$ is the quotient module $[\mathcal I]^\perp$. 
The rigidity phenomenon for Hilbert modules is the question of determining when two submodules of the form $[\mathcal I]$,  $[\mathcal I']$ of an analytic Hilbert module are equivalent. There is also the question of finding invariants for the sub and quotient modules $[\mathcal I]$, $[\mathcal I]^\perp$. In this paper, we mostly discuss the pair $(\mathcal H, [\mathcal I])$, where $\mathcal H$ is an {\sf \emph analytic Hilbert modules}. 

For any submodule $\mathcal M$ of an analytic Hilbert module, we define the subsheaf
$\mathcal S^\mathcal M$ of the sheaf $\mathcal O(\Omega)$ of holomorphic functions on $\Omega$ as follows: 
$$\mathcal S^\mathcal M(U) := \Big \{\, \sum_{i=1}^n ({f_i|}_U) h_i : f_i \in \mathcal M, h_i \in \mathcal O(U), n\in\mathbb N\,\Big  \},$$
for any open $U \subset \Omega$. We note $\mathcal S^\mathcal H(U) = \mathcal O(U)$ and thus it is free and naturally gives rise to a holomorphic line bundle on $\Omega$. However, in general, the sheaf corresponding to the sub-module $[\mathcal I]$ is not free, not even locally free, but only coherent.

As we have seen, the analytic Hilbert modules give rise to a holomorphic hermitian line bundle. Consequently, the methods of Cowen and Douglas developed for the class $B_1(\Omega)$ in \cite{CD} applies. The class $B_1(\Omega)$ consists of those commuting tuples of operators $\boldsymbol T:=(T_1, \ldots , T_m)$ on a Hilbert space $\mathcal H$  possessing an open set of eigenvalues, say $\Omega\subset \mathbb C^m$, a holomorphic map $\gamma:\Omega \to \mathcal H$ with the property that $(T_i-w)\gamma(w) =0$, $1\leq i \leq m$, for each $w\in\Omega$ and finally $\gamma(w)$ modulo the constants is the only such vector. 
Clearly, the map $w\to \gamma(w)$ defines a trivial holomorphic line bundle $\mathcal L$ on $\Omega$ with a hermitian structure inherited from the Hilbert space $\mathcal H$, namely, $\|\gamma(w)\|$. 
Recall that the curvature of $\mathcal L$ is defined to be the $(1,1)$ form:
$$\mathcal K_{\mathcal{L}}(z) := \sum_{i,j=1}^n \frac{\partial^2}{\partial z_i \partial \bar{z}_j} \log \|\gamma(z)\|^2 dz_i \wedge d\bar{z}_j.$$
The Cowen-Douglas theorem says that the curvature is a complete invaraint for the commuting tuple of operators $\boldsymbol T$. Thus two analytic Hilbert modules are equivalent if and only if their curvatures are equal. However, there is a large class of Hilbert modules, such as $[\mathcal I]$, where  the dimension of the joint kernel $\cap_{i=1}^m \ker (M_i - w_i)^*$ is not constant. For instance,  
let $H^2_{(0,0)}(\mathbb D^2)$ be the submodule of functions vanishing at $(0,0)$ of the Hardy module $H^2(\mathbb D^2)$.
In this case, 
$\dim \cap_{i=1}^2 \ker (M_i - w_i)^* = 1$ if $(w_1,w_2) \not = (0,0),$ 
while it is $2$ if $(w_1,w_2) = (0,0)$, cf. \cite{DMV}.

We study a class of Hilbert modules very similar to the ones introduced in \cite{SB} and designated $\mathfrak B_1(\Omega)$ in that paper. Here, while we retain the same symbol, the definition instead of requiring $\dim \big ( \mathcal H {\big /} \big ( \sum_{i=1}^m (z_i - w_i) \mathcal H \big ) \big ) < \infty,$ merely requires that $\dim \bigcap_{i=1}^m \ker (M_i - w_i)^* < \infty$, $w\in \Omega.$ For what we do here, this is adequate, and more importantly, this condition is not very hard to verify as in Proposition \ref{[I]}. 

\begin{defn}
The class $\mathfrak B_1(\Omega)$ consists of Hilbert modules $\mathcal H \subseteq \mathcal O(\Omega)$ possessing a reproducing kernel $K$ and such that  $\dim \cap_{i=1}^m \ker (M_i - w_i)^* < \infty$, $w\in \Omega$.
\end{defn}

All the analytic Hilbert modules $\mathcal H \subseteq \mathcal O(\Omega)$ are in $\mathfrak B_1(\Omega)$, see Section 2. However, the reproducing kernel $K$ of a Hilbert module in $\mathfrak B_1(\Omega)$ may vanish, that is, $K(w,w) = 0$ for $w$ in some closed subset of $\Omega$,  unlike the case of the analytic Hilbert modules. Indeed the modules in this class are the ones where the dimension of the joint kernel $\cap_{i=1}^m \ker (M_i - w_i)^*$ of the module multiplication is not necessarily constant. Therefore the techniques from complex geometry developed in \cite{CD, CS} do not apply directly.
The typical example that we will be considering is a pair $(\mathcal H, [\mathcal I])$, where $\mathcal H$ is an analytic Hilbert module and  $[\mathcal I]$ is  the completion of a polynomial ideal $\mathcal I $ in the norm topology of $\mathcal H$. If $\mathcal I = <p>$ is a principal ideal, then the kernel $K_{[\mathcal I]}$ factors:  $K_{[\mathcal I]}(z,w) = p(z)\chi(z, w)\overline{p(w)}$.  Thus  $w\mapsto \chi(\cdot, w)$ is holomorphic frame defining a holomorphic hermitian line bundle which serves as an invariant for $[\mathcal I]$. However, to study modules of the form $[\mathcal I]$ without assuming that $\mathcal I$ is necessarily principal, we make crucial use of the decomposition theorem from \cite{BMP}, which is reproduced below. 
\begin{thm}[Theorem 1.5, \cite{BMP}]\label{sourcethm} Let $w_0$ be a fixed but arbitrary point in $\Omega$. Suppose $\mathcal M$ is in $\mathfrak B_1(\Omega)$ and let $K$ be the reproducing kernel of $\mathcal M$. 
Assume that $g_{i}^{0}$, $1\leq i\leq d$, is a minimal set of generators for the stalk $\mathcal{S}^{\mathcal{M}}_{w_{0}}$. Then
\begin{enumerate}
\item[\rm(i)] there exists an open neighbourhood $\Omega_{0}$ of $w_{0}$ such that
$$K(\cdot,w)=\overline{g_{1}^{0}(w)}K^{(1)}(w)+\cdots+\overline{g_{d}^{0}(w)}K^{(d)}(w),~w\in\Omega_{0}$$
for some choice of anti-holomorphic maps $K^{(1)},\ldots,K^{(d)}:\Omega_{0}\to\mathcal{M}$,
\item[\rm(ii)] the vectors $K^{(i)}(w)$, $1\leq i\leq d$, are linearly independent in $\mathcal{M}$ for $w$ in some small neighbourhood of $w_{0}$,
\item[\rm(iii)] the vectors $K^{(i)}(w_{0})$, $1\leq i\leq d$, are uniquely determined by the generators $g_{1}^{0},\ldots,g_{d}^{0}$,
\item[\rm(iv)] the linear span of the set of vectors $\{K^{(i)}(w_{0}):1\leq i\leq d\}$ in $\mathcal{M}$ is independent of the choice of generators $g_{1}^{0},\ldots,g_{d}^{0}$, and
\item[\rm(v)] $M_{p}^{*}K^{(i)}(w_{0})=\overline{p(w_{0})}K^{(i)}(w_{0})$, for $i=1,\ldots,d$, where $M_{p}$ denotes the module multiplication by the polynomial $p$.
\end{enumerate}
\end{thm}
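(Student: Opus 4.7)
The plan is to build the decomposition on a small neighbourhood of $w_{0}$ by combining coherence of the sheaf $\mathcal{S}^{\mathcal{M}}$ with the Hermitian symmetry $K(z,w)=\overline{K(w,z)}$ of the reproducing kernel, and then to read off (ii)--(v) from a Riesz-representation construction. Since $g_{1}^{0},\ldots,g_{d}^{0}$ generate the stalk $\mathcal{S}^{\mathcal{M}}_{w_{0}}$, coherence furnishes holomorphic representatives $g_{1},\ldots,g_{d}$ on some neighbourhood $\Omega_{0}$ of $w_{0}$ together with a local surjection $\mathcal{O}_{\Omega_{0}}^{d}\twoheadrightarrow \mathcal{S}^{\mathcal{M}}|_{\Omega_{0}}$ given by $(h_{1},\ldots,h_{d})\mapsto \sum h_{i}g_{i}$. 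Minimality and Nakayama's lemma force $[g_{1}^{0}],\ldots,[g_{d}^{0}]$ to be a basis of the finite-dimensional quotient $\mathcal{S}^{\mathcal{M}}_{w_{0}}/\mathfrak{m}_{w_{0}}\mathcal{S}^{\mathcal{M}}_{w_{0}}$, a fact I would use throughout.

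For (i), Hermitian symmetry converts the identity $K(\cdot,w)=\sum_{i}\overline{g_{i}^{0}(w)}\,K^{(i)}(w)$ into the problem of finding $L_{i}(w,z)$ holomorphic in $w$ and anti-holomorphic in $z$ on $\Omega_{0}\times\Omega_{0}$ with
\[
K(w,z) \;=\; \sum_{i=1}^{d} g_{i}(w)\,L_{i}(w,z),
\]
after which $K^{(i)}(w)$ is recovered as the vector $z\mapsto \overline{L_{i}(w,z)}$ in $\mathcal{M}$. For each fixed $z$ the map $w\mapsto K(w,z)=\overline{K(z,w)}$ is a local section of $\mathcal{S}^{\mathcal{M}}$ near $w_{0}$ (because $K(\cdot,z)\in\mathcal{M}$), so coherence supplies such $L_{i}(\cdot,z)$ pointwise; the real task is to pick them with the correct joint dependence on $z$. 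For this I would select bounded linear functionals $\ell_{1},\ldots,\ell_{d}$ on $\mathcal{M}$ which, via the germ map $\mathcal{M}\to \mathcal{S}^{\mathcal{M}}_{w_{0}}$, descend to the dual basis of $[g_{1}^{0}],\ldots,[g_{d}^{0}]$, and then set $K^{(i)}(w)$ to be the Riesz representative of $\ell_{i}$ composed with a canonical $w$-twisting. The required anti-holomorphicity of $w\mapsto K^{(i)}(w)$ then comes for free from the anti-holomorphic dependence of $K(\cdot,w)$ on $w$.

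Once (i) is in place, properties (ii)--(v) fall out. Part (ii) is an open condition and holds at $w=w_{0}$ because $K^{(1)}(w_{0}),\ldots,K^{(d)}(w_{0})$ are Riesz duals of the linearly independent functionals $\ell_{1},\ldots,\ell_{d}$; (iii) is then immediate from the Riesz correspondence once the generators fix the $\ell_{i}$. For (iv), any two minimal generating sets of $\mathcal{S}^{\mathcal{M}}_{w_{0}}$ differ by an element of $GL_{d}(\mathcal{O}_{w_{0}})$ whose reduction modulo $\mathfrak{m}_{w_{0}}$ lies in $GL_{d}(\mathbb{C})$, and this reduction transforms the associated $K^{(i)}(w_{0})$ by the same invertible matrix, preserving their linear span. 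Property (v) reduces to showing $K^{(i)}(w_{0})\perp \mathfrak{m}_{w_{0}}\cdot\mathcal{M}$, which holds because $\ell_{i}$ annihilates $\mathfrak{m}_{w_{0}}\mathcal{S}^{\mathcal{M}}_{w_{0}}$ by construction; the eigenvalue equation for arbitrary polynomials $p$ then follows by commuting polynomial algebra. I expect the main obstacle to be the construction in the second paragraph: promoting pointwise-in-$z$ coherence lifts to a decomposition with the right joint (anti-)holomorphic dependence, for which combining the Hermitian symmetry of $K$ with Riesz representation is the essential glue.
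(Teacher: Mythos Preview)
This statement is not proved in the paper; Theorem~\ref{sourcethm} is quoted from \cite{BMP} and used as a black box, so there is no in-paper argument to compare yours against.

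On the merits of your sketch: the deductions of (ii)--(v) from (i) are fine, but your construction for (i) has a real gap. The functionals $\ell_{1},\ldots,\ell_{d}$ you build are attached to the single fibre $\mathcal{S}^{\mathcal{M}}_{w_{0}}/\mathfrak{m}_{w_{0}}\mathcal{S}^{\mathcal{M}}_{w_{0}}$, so their Riesz representatives are \emph{fixed} vectors in $\mathcal{M}$; at best this produces $K^{(1)}(w_{0}),\ldots,K^{(d)}(w_{0})$, not anti-holomorphic maps on a neighbourhood. The phrase ``canonical $w$-twisting'' is exactly where the content of (i) sits, and you have not said what it is. What is actually needed is, for each $w\in\Omega_{0}$, a bounded linear functional $\ell_{i}^{w}\colon f\mapsto h_{i}(f)(w)$ on $\mathcal{M}$, where $h_{i}(f)$ is the $i$-th coefficient in a division $f=\sum_{j}h_{j}(f)\,g_{j}^{0}$ on $\Omega_{0}$; then $K^{(i)}(w)$ is the Riesz vector of $\ell_{i}^{w}$, and both the identity $K(\cdot,w)=\sum_{i}\overline{g_{i}^{0}(w)}K^{(i)}(w)$ and the anti-holomorphicity in $w$ drop out. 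But the existence, boundedness, and holomorphic $w$-dependence of $\ell_{i}^{w}$ do not follow from the algebraic dual-basis picture you describe; they require a \emph{continuous linear section} of the surjection $\mathcal{O}(\Omega_{0})^{d}\to\mathcal{S}^{\mathcal{M}}(\Omega_{0})$, obtained by choosing $\Omega_{0}$ Stein and invoking the open mapping theorem for Fr\'echet spaces. This analytic step is the heart of the argument in \cite{BMP}, and it is absent from your outline. The same omission also undercuts your bare assertion that the $\ell_{i}$ are bounded: the composite $\mathcal{M}\to\mathcal{S}^{\mathcal{M}}_{w_{0}}\to\mathcal{S}^{\mathcal{M}}_{w_{0}}/\mathfrak{m}_{w_{0}}\mathcal{S}^{\mathcal{M}}_{w_{0}}$ is not obviously continuous without that input.
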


The decomposition  of the kernel function given in Theorem \ref{sourcethm} naturally gives rise to the hermitian structure on the ``linear space'' associated to the coherent sheaf $\mathcal S^{[\mathcal I]}$ described in Section 3. Moreover, it also enables us to define a holomorphic hermitian line bundle in a canonical manner leading to tractable invariants for the submodule $\mathcal M$. 




On the other hand, part (iii) of the Theorem \ref{sourcethm} gurantees only that  $K^{(i)}$'s are uniquely determined at the point $w_0$. It is natural to ask what condition on an ideal $\mathcal {I}$ might ensure that these vectors are determined uniquely over the zero set $V_{[\mathcal I]}$. In Section 4, we prove that if $V_{[\mathcal I]}$ is  a complex manifold codimension $t$ and the ideal is generated by $t$ polynomials, then the $K^{(i)}$'s are uniquely determined. This is part of Theorem \ref{mainthm} which improves Theorem \ref{sourcethm} in this case. In Corollary \ref{cor1}, we have shown that the dimension formula holds for a larger class ideals than covered in Duan-Guo  \cite[Proposition 2.2 and Theorem 2.3]{DG}. In section 5, Theorem \ref{thm2}, we show that the class of the vector bundle on the zero set $V_{[\mathcal I]}$ obtained from Theorem \ref{mainthm} is an invariant for the Hilbert module $[\mathcal I]$. We further show in Corollary \ref{cor:4.6}, that in case of a homogeneous ideal, the curvatures are related via conjugation by a constant matrix with respect to two different minimal sets of generators.

\section{Preliminaries} 
The relationship between analytic Hilbert modules and commuting tuples of operators in the Cowen-Douglas class is revealed by using the notion of a non-negative definite kernel $K$ with the reproducing property discussed below. In particular, $w \to K(\cdot, w)$, $w\in \Omega$, serves as an anti-holomorphic frame for the eigenbundle for the operator (or, the commuting tuple of operators) in the Cowen-Douglas class.

\begin{defn}
The function $K:\Omega \times \Omega \to \mathbb C$ is said to be {\sf \emph{non-negative definite kernel}} if:
$$\Big \langle \big (\! \big ( K(z_i,z_j) \big ) \! \big ) {x} , {x} \Big \rangle \geq 0, \,\, \{z_1, \ldots, z_n\} \subseteq \Omega,\,\, {x}\in \mathbb C^n,\,\, n\in \mathbb N.$$
\end{defn}
In this paper,  we assume the kernel function $K$ to be holomorphic in first variable and anti-holomorphic in second.  Let $k_w$ be the holomorphic function defined by $k_w(z):=K(z, w)$.

Let $\mathcal H^0$ be the linear span of the vectors $\{k_w: w \in \Omega\}$. For any finite subset $\{z_1, \ldots , z_n\}$ of $\Omega$ and complex numbers $x_1, \ldots , x_n$,  set
$$ \Big \| \sum_{i=1}^n x_j k_{z_j} \Big \|^ 2 : = \Big \langle \big (\! \big (  K(z_i,z_j) \big ) \! \big ) {x} , {x} \Big \rangle,$$
where ${x}$ is the vector whose {\emph i}\,-th coordinate is $x_i$. Since $K$ is assumed to be non-negative definite, this defines a semi-norm on the linear space $\mathcal H^0$.

Now, the sesquilinear form $K(z,w) = \langle K(\cdot, w), K(\cdot, z) \rangle$ is non-negative definite by assumption. However, for $f\in \mathcal H^0$, the Cauchy-Schwarz inequality gives
$$|f(w)|^2 = |\langle f, K(\cdot, w)\rangle|^2 \leq \|f\|^2 K(w,w),\,\, w\in \Omega.$$
It follows that if $\|f\|=0$, then $f$ is the zero vector in $\mathcal H^0$.
Thus the semi-norm defined by $K$, as above, is indeed a norm on $\mathcal H^0$.  The completion of $\mathcal H^0$ equipped with this norm is a Hilbert space, which consists of holomorphic functions on $\Omega$ (cf. \cite{aronszajn, PR}).  The function $k_w:=K(\cdot, w)$, then has the reproducing property, namely, $$\langle f, k_w\rangle = f(w),\,\, f\in \mathcal H,\,\, w\in \Omega.$$

Conversely, assume that the point evaluation $e_w:\mathcal H \to \mathbb C$, $w\in \Omega$, on a Hilbert space $\mathcal H \subseteq \mathcal O(\Omega)$  is bounded, that is, $|f(w)| \leq C \|f\|$, $f\in \mathcal H$. Then $f(w) = \langle f , k_w \rangle$ for some vector $k_w\in \mathcal H$.  It follows that $e_w^* = k_w$. Let  $ K(z,w) = e_z k_w  = e_z e_w^*$. The function $K$ is holomorphic in the first variable and anti-holomoprhic in the second.  Also,  $\overline{K(z,w)} = K(w,z)$. Finally, for any finite subset $\{z_1, \ldots , z_n\}$ of $\Omega$, we have
\begin{equation}\label{rk}
0 \leq \Big \| \sum_{i=1}^n x_j k_{z_j} \Big \|^2
 = \sum_{i,j=1}^n \bar{x}_i x_j K(z_i,z_j)
 = \big \langle \big (\! \big (  K(z_i,z_j) \big ) \! \big ) {x} , {x} \big \rangle, \,\, x\in \mathbb C^n.
\end{equation}
The non-negative definite function $K$ is said to be the {\sf \emph{ reproducing kernel}} of the Hilbert space $\mathcal H$.


There are several notions, namely, locally free modules \cite{XChenRGD}, modules with sharp kernels \cite{AS}, quasi-free modules \cite{qfree}, which are closely related to the notion of analytic Hilbert modules. In all of these variants, the definition ensures the existence of a holomorphic hermitian vector bundle corresponding to these Hilbert modules.  The fundamental theorem of Cowen and Douglas (cf. \cite{CD}) then applies and says that the equivalence class of the Hilbert modules and those of the vector bundles determine each other.  Finding tractable invariants for these remains a challenge.

It is easy to verify that $M_p^* k_w = \overline{p(w)} k_w$, or equivalently, $k_w$ is in $\ker (M_p - p(w))^*$. If $\mathcal H$ is an analytic Hilbert module, then it follows that the $\dim \cap_{i=1}^m \ker (M_i - w_i)^* = 1,$ $w\in \Omega$, where $M_i$ is the operator of multiplication by the coordinate function $z_i$ on $\mathcal H$.
This is easily verified as follows. For any $f\in  \cap_{i=1}^m \ker (M_i - w_i)^*$, $p\in \mathbb C[\underline{z}]$, we have
$$
\langle f, p\rangle = \langle M_p^* f , 1 \rangle = \langle \overline{p(w)} f, 1 \rangle = \langle a k_w , p \rangle,
$$
where $a= \langle f, 1 \rangle$.   Therefore, if $\mathcal H$ is an analytic Hilbert module, then the dimension of the joint kernel $\cap_{i=1}^m \ker (M_i - w_i)^*$ is $1$ and is spanned by the vector $k_w$. Hence the map $\gamma: \Omega^* \to \mathcal H$, $\gamma(w) = k_{\bar{w}}$ is holomorphic for $w \in \Omega^*:= \{w \in \mathbb C^m: \bar{w} \in \Omega\}$. Thus it defines a holomorphic hermitian line bundle $\mathcal L$ on $\Omega^*$.
If $\alpha$ is a non-vanishing holomorphic function defined on $\Omega^*$, then $\alpha(w) \gamma(w)$ serves as a holomorphic frame for the line bundle $\mathcal L$ as well. The hermitian structures induced by these two holomorphic frames are
$\|\gamma(w)\|^2$ and $|\alpha(w)|^2 \|\gamma(w)\|^2$, respectively. These differ by the absolute square of a non-vanishing holomorphic function. However, the curvature $\mathcal K_{\mathcal{L}}$ defined relative to either one of these two frames is the same and therefore serves as an invariant for the holomorphic hermitian line bundle $\mathcal L$.

  It is easy to see that $[\mathcal I]$ is in $\mathfrak B_1(\Omega)$ using the notion of \textit{module tensor product}.  Let $\mathcal H_1$ and $\mathcal H_2$ be two Hilbert modules over the polynomial ring $\mathbb C[\underline{z}]$. The Hilbert space tensor product $\mathcal H_1 \otimes \mathcal H_2$ of these two Hilbert modules has two natural module multiplications, namely, $m_p\otimes {\rm Id} (f_1\otimes f_2) = m_p (f_1) \otimes f_2$ and  ${\rm Id} \otimes m_p (f_1\otimes f_2) = f_1 \otimes m_p(f_2)$. The module tensor product $\mathcal H_1 \otimes_{\mathbb C[\underline{z}]} \mathcal H_2$ is obtained by identifying the space on which these two multiplications coincide. Set $\mathcal N$ to be the closed subspace spanned by the set of vectors  
$$\{ m_p f_1 \otimes f_2 - f_1 \otimes m_p f_2 : f_1 \in \mathcal H_1, f_2\in \mathcal H_2, p \in \mathbb C[\underline{z}]\} \subseteq \mathcal H_1 \otimes \mathcal H_2.$$
The subspace $\mathcal N$ is a submodule for both the left and the right multiplications: $m_p\otimes{\rm Id}$ and ${\rm Id} \otimes m_p$. On the quotient $\mathcal N^\perp =(\mathcal H_1 \otimes \mathcal H_2) \ominus \mathcal N$, these two module multiplications coincide (cf. \cite{DP}). The quotient Hilbert space $\mathcal N^\perp$ equipped with this multiplication is the module tensor product.

We consider the special case  $\mathcal H\otimes_{\mathbb C[\underline{z}]} \mathbb C_w$, where $\mathbb C_w$ is $\mathbb C$ equipped with the module multiplication $m_p(\lambda) = p(w) \lambda$, $w \in \Omega$, $p\in \mathbb C[\underline{z}]$. 
In $\mathcal H$, let $\mathcal{J}(w)$ denote  the joint kernel $\cap_{i=1}^m \ker (M_i - w_i)^* = \cap_{p\in \mathbb C[\underline{z}]} \ker (M_p - p(w))^*$. We have the following  useful lemma.
\begin{lem}
Let $\mathcal H \subseteq \mathcal O(\Omega)$ be a Hilbert module.
For any $w\in \Omega$, we have the equality
$$\mathcal H\otimes_{\mathbb C[\underline{z}]} \mathbb C_w =  \Big (\mathcal J(w)\Big )\otimes \mathbb C.$$
\end{lem}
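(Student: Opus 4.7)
The plan is to unwind the definition of the module tensor product $\mathcal H \otimes_{\mathbb C[\underline{z}]} \mathbb C_w$ explicitly in this very concrete setting, and then identify the orthogonal complement as $\mathcal J(w)$ by a dual pairing.

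First I would use the canonical unitary isomorphism $\mathcal H \otimes \mathbb C \to \mathcal H$ given by $f \otimes \lambda \mapsto \lambda f$ to transport the problem into $\mathcal H$ itself. Under this identification, the generating vectors of the subspace $\mathcal N$, namely $m_p f \otimes \lambda - f \otimes m_p \lambda$, become $\lambda \bigl(M_p f - p(w) f\bigr) = \lambda\,(M_p - p(w))f$ for $f \in \mathcal H$, $\lambda \in \mathbb C$, $p \in \mathbb C[\underline{z}]$. Thus $\mathcal N$ corresponds to the closure of
\[
\mathcal R(w) \;:=\; \sum_{p\in \mathbb C[\underline{z}]} (M_p - p(w))\,\mathcal H \;\subseteq\; \mathcal H,
\]
and $\mathcal H \otimes_{\mathbb C[\underline{z}]} \mathbb C_w$ corresponds to $\mathcal R(w)^\perp$.

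Next I would compute this orthogonal complement. A vector $h \in \mathcal H$ lies in $\mathcal R(w)^\perp$ if and only if
\[
\langle h,\,(M_p - p(w))f\rangle \;=\; 0 \quad \text{for all } f\in \mathcal H,\ p\in \mathbb C[\underline{z}],
\]
which is equivalent to $(M_p - p(w))^* h = 0$ for every $p$, i.e. $h \in \bigcap_{p} \ker (M_p - p(w))^* = \mathcal J(w)$. Conversely every vector in $\mathcal J(w)$ clearly annihilates $\mathcal R(w)$, so $\mathcal R(w)^\perp = \mathcal J(w)$.

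Pulling this back through the isomorphism $\mathcal H \otimes \mathbb C \cong \mathcal H$ yields $\mathcal N^\perp = \mathcal J(w) \otimes \mathbb C$, which is the claimed equality. The only mild subtlety, rather than an obstacle, is that $\mathcal N$ is defined as the \emph{closed} linear span, so one must take the closure of $\mathcal R(w)$ before orthogonally complementing; this is harmless because the orthogonal complement of a set and of its closure coincide, and the equivalence of $h \perp (M_p - p(w))\mathcal H$ with $(M_p - p(w))^* h = 0$ does not require $\mathcal R(w)$ to be closed.
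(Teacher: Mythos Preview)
Your argument is correct and follows essentially the same route as the paper: both compute the module tensor product as the orthogonal complement of the closed span of the vectors $(M_p - p(w))f \otimes \lambda$ and identify that complement with $\mathcal J(w)\otimes\mathbb C$ via the range--kernel duality $(\operatorname{ran}(M_p - p(w)))^\perp = \ker(M_p - p(w))^*$. The only cosmetic difference is that you first transport everything to $\mathcal H$ via the unitary $f\otimes\lambda\mapsto\lambda f$, whereas the paper works directly inside $\mathcal H\otimes\mathbb C$ using that every element is a simple tensor.
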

\begin{proof}
The proof consists of the following string of equalities:
\begin{align*}
\mathcal H \otimes_{\mathbb C[\underline{z}]} \mathbb C_w &= \big ( \mathcal H\otimes \mathbb C\big )/\big \{p f \otimes \lambda - f \otimes p(w) \lambda: f\in \mathcal H, p \in \mathbb C[\underline{z}], \lambda \in \mathbb C\big \} \\
&=\big \{ (p-p(w)) f \otimes \lambda:f\in \mathcal H, p \in \mathbb C[\underline{z}], \lambda \in \mathbb C \big \}^\perp\\
&= \big \{ g \otimes \mu \in \mathcal H\otimes \mathbb C: \langle g, (p-p(w))f \rangle \mu \bar{\lambda} = 0,\,\,f\in \mathcal H, p \in \mathbb C[\underline{z}], \lambda \in \mathbb C\big \}\\
&=\big \{g\otimes \mu:\langle M_{p-p(w)}^* g, f \rangle = 0, \,\,f\in \mathcal H, p \in \mathbb C[\underline{z}], \lambda \in \mathbb C \big \}\\
&=\Big (\mathcal J(w)\Big )\otimes \mathbb C.
\end{align*}
These equalities are  easily verified.
\end{proof}

\begin{prop} \label{[I]}
The submodule $[\mathcal I]$ of an analytic Hilbert module   $\mathcal H \subseteq \mathcal O(\Omega)$ is in $\mathfrak B_1(\Omega)$.
\end{prop}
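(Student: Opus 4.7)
The plan is to verify the two requirements for membership in $\mathfrak B_1(\Omega)$: that $[\mathcal I]$ has a reproducing kernel, and that the joint kernel $\mathcal J(w):=\cap_{i=1}^m \ker(M_i-w_i)^*$ taken inside $[\mathcal I]$ has finite dimension at every $w\in\Omega$. The first is immediate: since $[\mathcal I]$ is a closed subspace of the reproducing kernel Hilbert space $\mathcal H$, the kernel $K_{[\mathcal I]}(\cdot,w)=P_{[\mathcal I]}K(\cdot,w)$ serves as the reproducing kernel, and $[\mathcal I]\subseteq\mathcal H\subseteq \mathcal O(\Omega)$. So the real work is to bound $\dim\mathcal J(w)$.

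For the second requirement, I would reformulate $\mathcal J(w)$ as an orthogonal complement. Writing out $M_p^*$ on $[\mathcal I]$ as the adjoint of $g\mapsto pg$, one sees that $f\in\mathcal J(w)$ iff $f\perp (z_i-w_i)[\mathcal I]$ for each $i$, i.e.\
\[
\mathcal J(w)\;=\;[\mathcal I]\ominus\overline{\textstyle\sum_{i=1}^m(z_i-w_i)[\mathcal I]}.
\]
(This is just the statement of the Lemma above applied to the submodule $[\mathcal I]$.) Thus it suffices to show that $[\mathcal I]$ is the sum of $\overline{\sum_i (z_i-w_i)[\mathcal I]}$ and a finite-dimensional piece.

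The key algebraic observation is a Taylor-expansion-at-$w$ argument inside the polynomial ideal $\mathcal I$. Any $q\in\mathcal I$ can be written $q=\sum_{j=1}^t a_j p_j$ for polynomials $a_j$; expanding $a_j=a_j(w)+\sum_i(z_i-w_i)b_{ij}$ with $b_{ij}\in\mathbb C[\underline z]$, one gets
\[
q\;=\;\sum_{j=1}^t a_j(w)\,p_j \;+\; \sum_{i=1}^m (z_i-w_i)\Bigl(\sum_j b_{ij}p_j\Bigr),
\]
and the second summand lies in $\sum_i (z_i-w_i)\mathcal I$. Therefore, at the algebraic level,
\[
\mathcal I\;=\;V+\sum_{i=1}^m(z_i-w_i)\mathcal I,\qquad V:=\mathrm{span}_{\mathbb C}\{p_1,\dots,p_t\}.
\]
Taking closures in $\mathcal H$ and using that $\sum_i(z_i-w_i)\mathcal I\subseteq\sum_i(z_i-w_i)[\mathcal I]$, it follows that $[\mathcal I]=\overline{\mathcal I}$ is contained in $V+W$, where $W:=\overline{\sum_{i=1}^m(z_i-w_i)[\mathcal I]}$.

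The final step is a soft topological fact I would invoke: the sum of a closed subspace and a finite-dimensional subspace of a Banach space is closed. Hence $V+W$ is already closed, and the previous inclusion gives $[\mathcal I]=V+W$. Consequently
\[
\dim\mathcal J(w)\;=\;\dim\bigl([\mathcal I]/W\bigr)\;\leq\;\dim V\;\leq\;t,
\]
which is finite and in fact uniformly bounded by the number of generators. The only potential pitfall is ensuring the passage from the algebraic identity for $\mathcal I$ to its Hilbert-space closure is legitimate; this is handled precisely by the "finite-dimensional plus closed is closed" lemma, which avoids having to know anything delicate about the closure of $\sum_i(z_i-w_i)[\mathcal I]$ itself.
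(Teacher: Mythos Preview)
Your proof is correct. The paper's argument follows the same outline but is terser: it identifies $\mathcal J(w)$ with $[\mathcal I]\otimes_{\mathbb C[\underline z]}\mathbb C_w$ via the preceding Lemma (which is exactly your orthogonal-complement reformulation), observes that $\mathcal I$ is finitely generated since $\mathbb C[\underline z]$ is Noetherian, and then simply cites \cite[Lemma~5.11]{DP} to conclude $\dim\big([\mathcal I]\otimes_{\mathbb C[\underline z]}\mathbb C_w\big)\leq t$. Your Taylor-expansion step together with the ``finite-dimensional plus closed is closed'' observation is precisely a self-contained proof of that cited lemma in this setting, so what you have written is essentially the paper's proof with the black-box citation unpacked. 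The payoff of your version is that it is elementary and explicit, and it makes the uniform bound $\dim\mathcal J(w)\leq t$ visible without appeal to the module tensor product formalism; the paper's version is shorter but relies on the reader knowing or looking up the Douglas--Paulsen result.
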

\begin{proof} We note that 
\begin{align*}
\dim \bigcap_{i=1}^m \ker (M_i - w_i)^* &= \dim \mathcal J(w)  = \dim \big ([\mathcal I] \otimes_{\mathbb C[\underline{z}]} \mathbb C_w\big ).
\end{align*}
Since the polynomial ring is Noetherian, it follows that $[\mathcal I]$ is finitely generated.  Now, from \cite[Lemma 5.11]{DP}, it follows  that the $\dim \big ([\mathcal I] \otimes_{\mathbb C[\underline{z}]} \mathbb C_w\big )$ is finite,  completing the proof.
\end{proof}

Any module map $L:\mathcal H \to \widetilde{\mathcal H}$ must map the joint kernel $\mathcal J(w) \subseteq \mathcal H$ into the joint kernel $\widetilde{\mathcal J}(w) \subseteq \widetilde{\mathcal H}$. If the map $L$  is assumed to be invertible then its restriction to the kernel $\mathcal J(w) \subseteq \mathcal H$ is evidently an isomorphism. Thus we have proved the following proposition.
\begin{prop}
Suppose $\mathcal H$ and $\widetilde{\mathcal H}$ are two Hilbert module in $\mathcal{O}(\Omega)$, which are isomorphic via an invertible module map. Then $\mathcal H\otimes_{\mathbb C[\underline{z}]}\mathbb C_w$ and $\widetilde{\mathcal H}\otimes_{\mathbb C[\underline{z}]}\mathbb C_w$ are isomorphic for each $w\in \Omega$.
\end{prop}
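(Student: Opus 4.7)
The plan is to invoke the preceding lemma, which identifies $\mathcal H \otimes_{\mathbb C[\boldsymbol z]} \mathbb C_w$ with $\mathcal J(w) \otimes \mathbb C$ (and likewise for $\widetilde{\mathcal H}$), thereby reducing the proposition to exhibiting a linear isomorphism between the joint kernels $\mathcal J(w) \subseteq \mathcal H$ and $\widetilde{\mathcal J}(w) \subseteq \widetilde{\mathcal H}$.

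To produce this isomorphism I would pass to adjoints. The invertible module map $L \colon \mathcal H \to \widetilde{\mathcal H}$ satisfies $L\, M_p = \widetilde M_p\, L$ for every $p \in \mathbb C[\boldsymbol z]$, and taking adjoints yields $M_p^\ast\, L^\ast = L^\ast\, \widetilde M_p^\ast$. Hence for any $\tilde f \in \widetilde{\mathcal J}(w)$,
\[
M_p^\ast(L^\ast \tilde f) \;=\; L^\ast \widetilde M_p^\ast \tilde f \;=\; L^\ast\bigl(\overline{p(w)}\, \tilde f\bigr) \;=\; \overline{p(w)}\, L^\ast \tilde f,
\]
so $L^\ast \tilde f$ lies in $\mathcal J(w)$. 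Since $L^{-1}$ is itself an invertible module map (from $L^{-1}\widetilde M_p = M_p L^{-1}$), the same computation applied to $L^{-1}$ shows that $(L^\ast)^{-1}$ sends $\mathcal J(w)$ into $\widetilde{\mathcal J}(w)$. Thus $L^\ast$ restricts to a linear isomorphism from $\widetilde{\mathcal J}(w)$ onto $\mathcal J(w)$.

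Tensoring with $\mathbb C$ on either side then yields the chain of isomorphisms
\[
\mathcal H \otimes_{\mathbb C[\boldsymbol z]} \mathbb C_w \;\cong\; \mathcal J(w) \otimes \mathbb C \;\cong\; \widetilde{\mathcal J}(w) \otimes \mathbb C \;\cong\; \widetilde{\mathcal H} \otimes_{\mathbb C[\boldsymbol z]} \mathbb C_w,
\]
which is the assertion of the proposition. There is no real obstacle here, as the lemma does all of the structural work; the only subtlety worth flagging is that a module map intertwines the $M_p$'s but not their adjoints, so it is $L^\ast$ rather than $L$ itself that must be used to transport the joint kernels between the two modules.
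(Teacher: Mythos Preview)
Your argument is correct and follows essentially the same route as the paper: reduce via the preceding lemma to the joint kernels and then transport them using the given module map. In fact, your version is slightly more careful than the paper's, which asserts that $L$ itself carries $\mathcal J(w)$ into $\widetilde{\mathcal J}(w)$; as you rightly point out, for a merely invertible (not necessarily unitary) module map it is $L^\ast$ that intertwines the adjoints and hence maps the joint kernels.
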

Therefore, $\dim \mathcal J(w)$ is clearly an invariant for the class of Hilbert modules in $\mathcal{O}(\Omega)$. For an analytic Hilbert module, this is a constant function.

Now we observe (as in \cite{DMV}), for the Hardy module $H^2(\mathbb D^2)$, $\dim \mathcal J(w)$ is identically $1$ for all $w\in \mathbb D^2$ while for the submodule $H^2_{(0,0)}(\mathbb D^2)$, it equals $1$ for $w\not =(0,0)$ but is equal to $2$ at $(0,0)$. Thus $H^2(\mathbb D^2)$ and  $H^2_{(0,0)}(\mathbb D^2)$ are not equivalent via any invertible module map.  

The module tensor product $\mathcal H\otimes_{\mathbb C[\underline{z}]} \mathbb C_w$ is said to be the {\sf \emph{localization}} of $\mathcal H$ at $w$ and the set $ {\rm Sp}(\mathcal H):=\big \{\mathcal H\otimes_{\mathbb C[\underline{z}]} \mathbb C_w: w\in \Omega\big \}$ is said to be the spectral sheaf. When $\mathcal H\subseteq \mathcal O(\Omega)$ is an analytic Hilbert module,  the spectral sheaf determines an anti-holomorphic line bundle via the frame $1\otimes _{\mathbb C[\underline{z}]} 1_w$. The hermitian structure is induced from $\mathcal H\otimes_{\mathbb C[\underline{z}]}\mathbb C_w$. In general, however, the spectral sheaf is a direct sum of $k$ copies of $\mathbb C_w$, where $k$ is between $1$ and $t$, which is the rank of $\mathcal H$, see below. In what follows, it will be convenient to use the notion of {\sf \emph{locally free}} module of rank $n$ over $\Omega^*:=\{w\in\mathbb{C}^{m}:\bar{w}\in\Omega\}$, where $\Omega$ is some open bounded subset of $\mathbb C^m$.

\begin{defn}[Definition 1.4, \cite{CD}]\label{locally-free}
Let $\mathcal H$ be a Hilbert module over $\mathbb C[\underline{z}].$
Let $\Omega$ be a bounded open connected subset of $\mathbb C^m.$
We say $\mathcal H$ is locally free of rank $n$ at $w_0$ in $\Omega^*$ if  there exists a neighbourhood $\Omega_0^*$ of $w_0$ and holomorphic functions $\gamma_1, \gamma_2, \ldots , \gamma_n:\Omega_0^* \to \mathcal H$ such that the linear span of the set of $n$ vectors $\{\gamma_1(w), \ldots, \gamma_n(w)\}$  is the module tensor product $\mathcal H \otimes_{\mathbb C[\underline{z}]} \mathbb C_{\bar{w}}.$ Following the terminology of \cite{XChenRGD}, we say that a module $\mathcal H$ is {\textit locally free on $\Omega^*$} of rank $n$  if it is locally free of rank $n$ at every $w$ in $\Omega^*.$
\end{defn}

Setting $V_{[\mathcal I]}: = \big\{ z\in \Omega: f(z) =0, f\in [\mathcal I]\big \}$, in the examples $(\mathcal H,[\mathcal I])$, we have the following Lemma from \cite[Lemma 1.3]{BMP}.
\begin{lem}
The submodule $[\mathcal I]$ of an analytic Hilbert module  $\mathcal H\subseteq\mathcal{O}(\Omega)$ is locally free on $\Omega^*$ of rank $1$ if the  ideal $\mathcal I$ is principal while if $p_1, \ldots p_t$, $t > 1$, is a minimal set of generators for $\mathcal I$, then $[\mathcal I]$ is locally free on $(\Omega\setminus V_{[\mathcal I]})^*$ of rank $1$.
\end{lem}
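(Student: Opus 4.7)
The strategy is to reduce both claims to an application of Theorem \ref{sourcethm} at each base point, after verifying that the stalk $\mathcal S^{[\mathcal I]}_{w_0}$ admits a minimal generating set of cardinality $d = 1$. Once $d = 1$ is established, that theorem produces an anti-holomorphic map $K^{(1)} : \Omega_0 \to [\mathcal I]$ which, by parts (ii) and (v), is nonvanishing on a neighborhood of $w_0$ and lies in the joint kernel $\mathcal J(w_0)$. By the module-tensor-product lemma above, $\mathcal J(w_0) \cong [\mathcal I] \otimes_{\mathbb C[\underline z]} \mathbb C_{w_0}$, and consequently the section $w \mapsto K^{(1)}(\bar w)$ on $\Omega_0^*$ is the candidate rank-$1$ holomorphic frame in the sense of Definition \ref{locally-free}.

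For the principal case $\mathcal I = \langle p\rangle$, the factorization $K_{[\mathcal I]}(z,w) = p(z) \chi(z,w) \overline{p(w)}$ noted in the introduction already realizes the decomposition in Theorem \ref{sourcethm}(i) with single generator $g_1^0 = p$ and $K^{(1)}(w) = p(\cdot) \chi(\cdot, w)$; equivalently, one checks that every $f \in [\mathcal I]$ is of the form $p \cdot h$ for some $h \in \mathcal O(\Omega)$ (since $p \cdot \mathbb C[\underline z]$ is dense in $[\mathcal I]$ and the reproducing property combined with the kernel factorization yields such an $h$ explicitly), so that $\mathcal S^{[\mathcal I]}_{w_0}$ is generated by the germ of $p$ alone for every $w_0 \in \Omega$. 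Thus $d = 1$ throughout $\Omega$ and the frame is defined on all of $\Omega^*$.

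For the non-principal case, fix $w_0 \in \Omega \setminus V_{[\mathcal I]}$. By definition of the zero set, some $p_j(w_0) \neq 0$, so the germ of $p_j$ is a unit in the local ring $\mathcal O_{w_0}$. Since $p_j \in [\mathcal I]$, the stalk $\mathcal S^{[\mathcal I]}_{w_0}$, viewed as an $\mathcal O_{w_0}$-submodule of $\mathcal O_{w_0}$, contains a unit and therefore equals $\mathcal O_{w_0}$ itself; this gives $d = 1$. Shrinking $\Omega_0$ so that $p_j$ remains nonvanishing on $\Omega_0 \subseteq \Omega \setminus V_{[\mathcal I]}$, the procedure above yields a rank-$1$ holomorphic frame on $\Omega_0^*$.

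The main technical point is verifying that $K^{(1)}(\bar w)$ actually spans $\mathcal J(\bar w)$ at every nearby point, not merely at $\bar w_0$; this reduces to showing $\dim \mathcal J(\bar w) = 1$ on the relevant open set. By the tensor-product lemma above, $\dim \mathcal J(\bar w)$ equals the fibre dimension $\dim_{\mathbb C} \bigl([\mathcal I] \otimes_{\mathbb C[\underline z]} \mathbb C_{\bar w}\bigr)$, and a Nakayama argument in $\mathcal O_{\bar w}$ identifies this with the minimal number of generators of the coherent stalk $\mathcal S^{[\mathcal I]}_{\bar w}$. Since the generator-count computation above propagates from $w_0$ to a whole neighborhood in both cases, the frame property holds throughout $\Omega_0^*$, closing the argument.
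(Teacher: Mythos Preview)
The paper does not prove this lemma; it is quoted from \cite[Lemma~1.3]{BMP}. Your route through Theorem~\ref{sourcethm} is workable in principle, but the last paragraph contains two real gaps.

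First, you assert that verifying $K^{(1)}(\bar w)$ spans $\mathcal J(\bar w)$ for every $w$ near $w_0$ ``reduces to showing $\dim \mathcal J(\bar w) = 1$.'' This reduction is valid only once you know $K^{(1)}(\bar w)\in\mathcal J(\bar w)$, and Theorem~\ref{sourcethm}(v) gives that membership only at the single point $w_0$. Off the zero set this is easily repaired: take the constant $1$ as generator of $\mathcal S^{[\mathcal I]}_{w_0}=\mathcal O_{w_0}$, so the decomposition reads $K^{(1)}(u)=K_{[\mathcal I]}(\cdot,u)$, which is always a joint eigenvector. In the principal case at a zero of $p$, however, one needs an extra continuity argument to propagate the eigenvector identity $(M_{z_i}-u_i)^*K^{(1)}(u)=0$ from the dense open set $\{p\ne 0\}$ across $V_{[\mathcal I]}$; you have not supplied this step.

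Second, the ``Nakayama argument'' identifying $\dim\mathcal J(\bar w)$ with the minimal number of generators of the stalk is unjustified. Nakayama equates the latter with $\dim\bigl(\mathcal S_{\bar w}/\mathfrak m_{\bar w}\mathcal S_{\bar w}\bigr)$; matching \emph{that} with $\dim\mathcal J(\bar w)$ is a nontrivial analytic statement---it is exactly what Theorem~\ref{mainthm}(4) establishes later, under additional hypotheses, by combining Theorem~\ref{sourcethm} with \cite[Lemma~5.11]{DP}. The bound you actually need here is simpler and purely module-theoretic: in the principal case $[\mathcal I]$ is singly generated as a Hilbert module, so \cite[Lemma~5.11]{DP} gives $\dim\mathcal J(w)\le 1$ directly; off $V_{[\mathcal I]}$ the computation displayed in the paper immediately after the lemma shows each $p_i\otimes_{\mathbb C[\underline z]}1_w$ is a scalar multiple of $K_{[\mathcal I]}(\cdot,w)\otimes 1$, forcing dimension $1$.
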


Assume that there is finite set of generators for $[\mathcal I]$, moreover, let $\{p_1, \ldots, p_t\}$ be a minimal set of generators. A description of the spectral sheaf  for a pair $(\mathcal H, [\mathcal I])$ is given below. 
For $w \in \Omega \setminus V_{[\mathcal I]}$,  we have $[\mathcal I]\otimes_{\mathbb C[\underline{z}]}\mathbb C_w = p_i \otimes_{\mathbb C[\underline{z}]} 1_w$, $1\leq i \leq t$. However,  note that
\begin{align*}
p_i \otimes_{\mathbb C[\underline{z}]} 1_w &= P_{\mathcal J(w)\otimes \mathbb C} (p_i \otimes 1)\\
&=\big (P_{\mathbb C[{K_{[\mathcal I]}(\cdot, w)]}}\otimes 1 \big )(p_i \otimes 1)\\
&=\tfrac{p_i(w)}{K_{[\mathcal I]}(w, w)} K_{[\mathcal I]}(\cdot, w)\otimes 1.
\end{align*}
Here $\mathbb C[{K_{[\mathcal I]}(\cdot, w)]}$ denotes the one dimensional space spanned by the vector $K(\cdot,w)$.  Thus the set of vectors $p_i \otimes_{\mathbb C[\underline{z}]} 1_w$ are linearly dependent and therefore $\dim [\mathcal I]\otimes_{\mathbb C[\underline{z}]}\mathbb C_w  = 1$ for $w\in \Omega \setminus V_{[\mathcal I]}$.  Based on this observation and explicit computations in simple examples, it was conjectured in \cite{DMV} that
$$\dim [\mathcal I]\otimes_{\mathbb C[\underline{z}]}\mathbb C_w   = \begin{cases} 1 & \mbox{ for } w\in \Omega \setminus V_{[\mathcal I]}\\
 \mbox{\rm codim of }V_{[\mathcal I]}& \mbox{ for } w\in V_{[\mathcal I]}
\end{cases}$$
This formula is shown to be false in general by means of several examples in the paper \cite{DG} by Duan and Guo. They show that the formula given above is valid if the ideal $\mathcal I$ has any one of the following properties:
\begin{enumerate}
\item $\mathcal I$ is singly generated,
\item $\mathcal I$ is a prime ideal in $\mathbb C[z_1, z_2]$
\item $\mathcal I$  is a prime ideal in $\mathbb C[z_1,\ldots , z_m]$, $m > 2$ and $w$ is a smooth point of $V_{[\mathcal I]}$.
\end{enumerate}

\section{Hermitian structure}


Let $\mathcal H\subseteq \mathcal O(\Omega)$  be an {\tt analytic}  Hilbert module over the polynomial ring $\mathbb C[\boldsymbol z]$, i.e., it is a reproducing kernel Hilbert space such that the polynomial ring is densely contained in it.  Let $\mathcal M:=[\mathcal I]$ be the completion of an ideal $\mathcal I \subset \mathcal H$. 
Thus $\mathcal M$ is again a reproducing kernel Hilbert space. However, the reproducing kernel $K(\cdot, w)$ of the submodule $\mathcal M$ might be  the zero vector for some $w\in\Omega$. Consequently, the dimension of the eigenspace $\cap_{i=1}^m \ker (M_i^* - \bar{w}_i)$ is no longer necessarily independent of $w\in \Omega$. For an analytic Hilbert module $\mathcal H$, the eigenspaces over $w\in \Omega$ defines a holomorphic hermitian vector bundle. As we have noted, any submodule $\mathcal M$ of the form $[\mathcal I]$ of an analytic Hilbert module $\mathcal H$ is in the class $\mathfrak B_1(\Omega)$. While $\mathcal M$ need not define a holomorphic vector bundle, it was shown in \cite{BMP} that it defines a coherent sheaf $\mathcal S^\mathcal M(\Omega)$. We assume that $\mathcal M$ is a submodule of an analytic Hilbert module and with a slight abuse of notation, we let 
$\mathcal S$ denote the coherent sheaf $\mathcal S^\mathcal M(\Omega)$ defined by $\mathcal M$. 

In what follows, we make the standing assumption that $\mathcal M$ is a submodule of an analytic Hilbert module $\mathcal H$ of the form $[\mathcal I]$.  

The main goal here is to define a hermitian structure for the coherent sheaf $\mathcal S$ by taking advantage of its presentation via the short exact sequence: Since the sheaf $\mathcal S$ is a subsheaf of the sheaf of holomorphic functions $\mathcal O(\Omega)$, for a fixed $w_0\in\Omega$, there is a open neighbourhood $U_0$  of $w_0$ and a natural number $n$ such that $\mathcal O_{U_0}^n \to \mathcal S_{U_0}\to 0$ is exact. The decomposition theorem \ref{sourcethm} produces linearly independent set of vectors  $\{K^{(1)}(w), \ldots, K^{(n)}(w) \}$, $w\in U_0$, depending holomorphically on $w$. The size $n$ of this set of vectors depends on $w_0$. Thus we have a Hermitian structure on the free module $\mathcal O_{U_0}^n$. We describe below how to transplant this Hermitian structure to the stalk of $\mathcal S$ at $w_0$.

For each $w\in \Omega$, let $\mathfrak m_w$ be the maximal ideal of $\mathcal O_w$ and let $\mathcal Q_w:=\mathcal O_w/\mathfrak m_w$ be the quotient module. For any  analytic sheaf $\mathcal S$, let  $\overline{\mathcal S}_{w}$ be the module $\mathcal S_w\otimes_{\mathcal O_w}\mathcal Q_w = \mathcal S_w/\mathfrak m_w \mathcal S_w$. If $\mathcal S$ is coherent, then $\overline{\mathcal S}_w$ is a finite dimensional linear space.

If $\mathcal O_U^n \to \mathcal S_U\to 0$ is an exact sequence, then the induced map $\overline{\mathcal O}^n_w \to \overline{\mathcal S}_w \to 0$, $w\in U$, is also exact. If $\langle \cdot, \cdot \rangle_w$ is an inner product on $\overline{\mathcal O}^n_z\, (\cong \mathbb C^n)$, then we equip $ \overline{\mathcal S}_w$ with the inner product induced by the onto map  
$\overline{\mathcal O}_w \to \overline{\mathcal S}_w \to 0$. 
\begin{defn}
 A hermitian structure on a coherent sheaf $\mathcal S$ is an assignment of inner products on each $\overline{\mathcal S}_w$, $w\in \Omega$. 
 
 If $\mathcal S$ is a coherent sheaf over $\Omega$, then there exists an open neighbourhood $U$ of a fixed but arbitrary  $w_0\in \Omega$ and a map $\varphi$ such that $\mathcal O_U^n \stackrel{\varphi}{\to} \mathcal S_U\to 0$ is exact. 
 
 A hermitian structure on a coherent sheaf $\mathcal S$ is said to be smooth if for a fixed but arbitrary $w_0\in \Omega$, there is a neighbourhood $U$ of $w_0$ and a smooth inner product on $\mathcal O^n(U)$, that is, a smooth assignment of inner products on the locally free sheaf $\overline{\mathcal O}^n_w$, $w\in U$, such that the inner product on $\overline{\mathcal S}_w$ is the quotient inner product induced by the onto map $\overline{\mathcal O}_w \stackrel{\overline{\varphi}_w}{\longrightarrow} \overbar{\mathcal S}_w \to 0$.  
 
 Finally, a coherent sheaf equipped with a smooth hermitian structure, as above, is said to be a holomorphic hermitian sheaf.
\end{defn}

For any fixed but arbitrary $w_0\in \Omega$, let $\Omega_0 \subseteq \Omega$ be an  open neighbourhood of $w_0$. 
Pick $h_1, \ldots, h_d$ in $\mathcal O(\Omega_0)$ and set $[h_1 g^0_1 + \cdots + h_d g_d^0]$ to be the equivalence class of 
$h_1 g^0_1 + \cdots + h_d g_d^0$ in $\mathcal S_{w_0}/\mathfrak m_{w_0} \mathcal S_{w_0}$. The assignment  
$$\big [h_1 g^0_1 + \cdots h_d g_d^0 \big ] \mapsto \big ( h_1(w_0), \ldots , h_d(w_0) \big )$$
is well defined: 

Suppose that $\sum_{i=1}^m h_i g_i^0 = \sum_{i=1}^m h^\prime_i g_i^0$. Then $\sum_{i=1}^m (h_i-h_i^\prime) g_i^0$ belongs to $\mathfrak m_{w_0} \mathcal S(K)_{w_0}$ since $h_i(0) = h_i^\prime(0)$, $i=1, \ldots , m$. Therefore, $\big [\sum_{i=1}^m h_i g_i^0 \big ] = \big [\sum_{i=1}^m h^\prime_i g_i^0\big ]$. 

This assignment provides a linear isomorphism between $\overline{\mathcal S}_{w_0}$ and $\mathbb C^d$. 

For any fixed but arbitrary $w_0\in \Omega$, there is an open neighbourhood $\Omega_0 \subseteq \Omega$ of $w_0$ such that
$K(\cdot , w) = g^0_1(w) K_{w_0}^{(1)}(\cdot, w) + \cdots + g^0_d(w) K_{w_0}^{(d)}(\cdot, w)$, $w\in \Omega$, where 
$\{g^0_1, \ldots , g^0_d\}$ are a minimal set of generators for the stalk $\mathcal S_{w_0}$ at $w_0\in \Omega_0$. Also, for any $w\in \Omega_0$, the vectors $\{K^{(1)}_{w_0}(\cdot , w), \ldots , K^{(d)}_{w_0}(\cdot , w)\} \subset \mathcal M$ are  linearly independent.

Now, we can assign an inner product on $\overline{\mathcal S}_{w_0}$ as follows: Let  
$$H(w_0) =\big ( \!\! \big ( H_{ij}(w_0) \big ) \!\! \big ): = \big ( \!\! \big ( \big \langle K^{(i)}_{w_0}(\cdot, w_0) , K^{(j)}_{w_0}(\cdot, w_0) \big \rangle \big ) \!\! \big )$$
be the Grammian  of the linearly independent vectors $\{K^{(1)}_{w_0}(\cdot, w_0), \ldots , K^{(d)}_{w_0}(\cdot ,w_0)\}$. The matrix $H(w_0)$ is therefore positive definite and is invertible. We define a hermitian structure on $\overline{\mathcal S}_{w_0}$ using $H(w_0)$. Assertion (iv) of Theorem \ref{sourcethm} shows that the hermitian structure at $w_0$ is independent of the choice of the generators.  

We claim that the prescription of a hermitian structure on $\overline{\mathcal S}_w$, $w$ in some open neighbourhood $\Omega_0$ of $w_0\in\Omega$ using $H(w)$, is a smooth hermitian structure on $\mathcal S$. 
We have a resolution $\mathcal O^d\to \mathcal S \to 0$. Let us define  an inner product on  $\overline{\mathcal O}^d_w$ using the hermitian structure $\mathbb H(w)^{-1}$, where $\mathbb H(w) = \big ( \!\! \big ( \big \langle K^{(i)}_{w_0}(\cdot , w), K^{(j)}_{w_0}(\cdot ,w) \big \rangle \big ) \!\! \big )$ and the set $\{K^{(1)}_{w_0}(\cdot, w), \ldots , k^{(d)}_{w_0}(\cdot ,w)\}$, $w\in \Omega_0$, of linearly independent vectors are as in  Theorem \ref{sourcethm} except that we have added the subscript $w_0$ to emphasize the dependence of these vectors on the fixed but arbitrary point $w_0$.  

We prove that the inner product induced by $\mathbb H$  on 
$\overline{\mathcal S}_{w_0}$, $w_0\in \Omega_0$, coincides with the one we have already defined.

For any $w\in \Omega_0$, let $\{s^{(w)}_1, \ldots , s^{(w)}_e\}$ be a minimal set of generators of $\mathcal S_w$. We can then find linearly independent vectors $K^{(1)}_w(\cdot ,u), \ldots , K^{(e)}_w(\cdot ,u)$ that are anti-holomorphic in the variable $u$ defined on some open neighbourhood of $w$ such that $K(\cdot, u) = \sum_{j=1}^e s^{(w)}_j(u) K^{(j)}_w(\cdot , u)$. Since $\{g_1^{(0)}, \ldots , g_d^{(0)}\}$ is also a set of generators for $\mathcal S_w(K)$, not necessarily minimal, we must have a full rank matrix $G \in \mathcal M_{e \times d}(\mathcal O_w)$ such that 
$$(s^{(w)}_1(u), \ldots , s^{(w)}_e(u)) G(u) = (g_1^{(0)}(u), \ldots ,g_d^{(0)}(u)),$$ 
u in some small open neighbourhood of $w$. 

\begin{thm} The coherent sheaf $\mathcal S$ is a holomorphic hermitian sheaf, that is, 
$$\big (\overline{G(w)}\, \mathbb H(w) \, G(w)^\dagger \big ) = \big ( \!\! \big ( \big \langle K^{(i)}_w(\cdot , w) , K^{(j)}_w(\cdot, w)  \big \rangle \big )\!\!\big )_{i,j=1}^e,\,\, w\in \Omega_0,$$
where $G(w)^\dagger$ is the transpose of the matrix $G(w)$ and $e$ depends on $w$. 
\end{thm}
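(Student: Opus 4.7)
The plan is to derive an explicit change-of-frame relation between the two local decompositions of $K(\cdot,u)$ near $w$ and then extract the Grammian identity by a direct sesquilinear expansion. The heart of the argument is that the $e\times d$ holomorphic transition matrix $G$ between the generating sets must force the anti-holomorphic local frames $\{K^{(i)}_{w_0}(\cdot,u)\}$ and $\{K^{(j)}_w(\cdot,u)\}$ to transform in a precisely dual fashion at $u=w$, via the matrix $\overline{G(w)}$.

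First I would equate the two decompositions of $K(\cdot,u)$, valid in a common neighbourhood of $w$, and substitute the relation $g_i^{(0)}=\sum_j s_j^{(w)} G_{ji}$. Taking conjugates of the coefficient identity and regrouping gives
\begin{equation*}
\sum_{j=1}^{e}\overline{s_j^{(w)}(u)}\Bigl(K_w^{(j)}(\cdot,u)-\sum_{i=1}^{d}\overline{G_{ji}(u)}\,K^{(i)}_{w_0}(\cdot,u)\Bigr)=0.
\end{equation*}
Since each $G_{ji}\in\mathcal{O}_w$ is holomorphic in $u$ and each $K^{(i)}_{w_0}(\cdot,u)$ is anti-holomorphic in $u$, the vector-valued map $\widetilde{K}^{(j)}(u):=\sum_i \overline{G_{ji}(u)}\,K^{(i)}_{w_0}(\cdot,u)$ is anti-holomorphic in a neighbourhood of $w$. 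Both $\{K_w^{(j)}(\cdot,u)\}_{j=1}^e$ and $\{\widetilde{K}^{(j)}(u)\}_{j=1}^e$ therefore provide anti-holomorphic decompositions of $K(\cdot,u)$ with respect to the \emph{same} minimal generating set $\{s_j^{(w)}\}$ of $\mathcal{S}_w$.

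Next I would invoke the pointwise uniqueness in part (iii) of Theorem \ref{sourcethm} at the basepoint $w$, which forces
\begin{equation*}
K_w^{(j)}(\cdot,w)=\sum_{i=1}^{d}\overline{G_{ji}(w)}\,K^{(i)}_{w_0}(\cdot,w),\qquad 1\leq j\leq e.
\end{equation*}
A direct sesquilinear expansion then finishes the proof:
\begin{align*}
\bigl\langle K_w^{(p)}(\cdot,w),K_w^{(q)}(\cdot,w)\bigr\rangle
&=\sum_{i,j=1}^{d}\overline{G_{pi}(w)}\,G_{qj}(w)\,\bigl\langle K^{(i)}_{w_0}(\cdot,w),K^{(j)}_{w_0}(\cdot,w)\bigr\rangle\\
&=\sum_{i,j=1}^{d}\overline{G_{pi}(w)}\,\mathbb{H}_{ij}(w)\,(G^{\dagger})_{jq}(w)
 =\bigl(\overline{G(w)}\,\mathbb{H}(w)\,G(w)^{\dagger}\bigr)_{pq}.
\end{align*}

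The main technical obstacle is the uniqueness step: one must verify that $\widetilde{K}^{(j)}$ really is anti-holomorphic (immediate from holomorphy of $G$) and that the two decompositions employ \emph{the same} minimal generators at the basepoint $w$, so that Theorem \ref{sourcethm}(iii) applies at $u=w$ and yields equality of frames \emph{at} $w$ (not merely of their sums against the generators). Once this pointwise equality is secured, smoothness of the hermitian structure in $w$ follows automatically from the holomorphic dependence of $G$ and the anti-holomorphic dependence of the $K_{w_0}^{(i)}$'s, so no further complex-analytic input is required.
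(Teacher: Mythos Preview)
Your proof is correct and follows essentially the same route as the paper's: equate the two local decompositions of $K(\cdot,u)$, substitute the generator relation $g_i^{(0)}=\sum_j s_j^{(w)}G_{ji}$, invoke the pointwise uniqueness of Theorem~\ref{sourcethm}(iii) at $u=w$ to obtain the frame relation $K_w^{(j)}(\cdot,w)=\sum_i\overline{G_{ji}(w)}\,K_{w_0}^{(i)}(\cdot,w)$, and then expand the Grammian sesquilinearly. The paper additionally spells out the quotient map $\overline{\varphi}_w:\mathbb{C}^d\to\mathbb{C}^e$ explicitly to identify the induced inner product on $\overline{\mathcal{S}}_w$ with $(\overline{G(w)}\,\mathbb{H}(w)\,G(w)^\dagger)^{-1}$, but the core argument and the uniqueness step are the same as yours.
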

\begin{proof}
 For $\alpha \in \mathcal O_w^d$, set $\beta:= G \alpha$, and note that  
\begin{align*}
   \begin{psmallmatrix} g^{(0)}_1& \ldots & g^{(0)}_d \end{psmallmatrix} \begin{psmallmatrix} \alpha_1\\ \vdots \\ \alpha_d \end{psmallmatrix}  & = \begin{psmallmatrix} s^{(w)}_1(w)& \ldots & s^{(w)}_e(w)\\ \end{psmallmatrix} G(w) \begin{psmallmatrix} \alpha_1\\ \vdots\\ \alpha_d \end{psmallmatrix}\\
   &= \begin{psmallmatrix} s^{(w)}_1(w)& \ldots & s^{(w)}_e(w) \end{psmallmatrix} \begin{psmallmatrix} \beta_1\\ \vdots\\ \beta_d \end{psmallmatrix}.
\end{align*}
Suppose that $\overline{\mathcal S}_w$ is identified with $\mathbb C^e$ via the map: $\big [ \sum_{i=1}^e \beta_i s_i^{(w)} \big ] \mapsto \big (\beta_1(w), \ldots, \beta_e(w) \big )$. Then the hermitian structure on $\overline{\mathcal S}_w$, by definition, is induced by the 
non-negative definite matrix $\big ( \!\! \big ( \big \langle K^{(i)}_{w}(\cdot , w), K^{(j)}_{w}(\cdot ,w) \big \rangle \big ) \!\! \big )_{i,j=1}^e$. 
The map 
$\mathbb C^d \,\cong\, \overline{\mathcal O}^d_w \,\stackrel{\overline{\varphi}_w}{\longrightarrow}\, \overline{\mathcal S}_w \,\cong\, \mathbb C^e$ 
is then given by the formula:
$$\begin{tikzcd} (\alpha_1(w), \ldots , \alpha_d(w)) \arrow{r} & \big ( \sum_{i=1}^d \alpha_i g_i^{(0)} \big )_{|w} \arrow{r} & \big ( \sum_{i=1}^d \beta_i s_i^{(w)} \big )_{|w} 
\arrow{r}& (\beta_1(w), \ldots , \beta_e(w)). \end{tikzcd}$$
and $(\beta_1(w) , \ldots , \beta_e(w)) = (\alpha_1(w), \ldots , \alpha_d(w) ) G(w)^\dagger$. Setting the hermitian structure on $\mathbb C^d$, induced by the set of linearly independent vectors $\{K^{(1)}_{w_0}(\cdot, w), \ldots , k^{(d)}_{w_0}(\cdot ,w)\}$ to be $\mathbb H(w):=\big ( \!\! \big ( \big \langle K^{(i)}_{w_0}(\cdot , w), K^{(j)}_{w_0}(\cdot ,w) \big \rangle \big ) \!\! \big )$, we see that the hermitian structure on $\mathbb C^e$ obtained via  the map $\overline{\varphi}_w$ is of the form $\big (\overline{G(w)}\mathbb H(w) G^\dagger(w)\big )^{-1}$. Since 
$$
 s^{(w)}_1(w) K^{(1)}_w(\cdot , w) +\cdots + s^{(w)}_e(w) K^{(e)}_w(\cdot , w)= K(\cdot ,w) = g^0_1(w) K_{w_0}^{(1)}(\cdot, w) + \cdots + g^0_d(w) K_{w_0}^{(d)}(\cdot, w), 
$$
it follows that 
$$\big ( K^{(1)}_{w_0}(\cdot, w), \cdots , K^{(d)}_{w_0}(\cdot, w) \big ) \boldsymbol{g}^{(0)} =  \big ( K^{(1)}_{w_0}(\cdot, w), \cdots , K^{(d)}_{w_0}(\cdot, w) \big ) G(w)^\dagger \boldsymbol{s}^{(w)}.$$ 
By the uniqueness statement at $w$ of the vectors 
$\{ K^{(1)}_w(\cdot , u), \ldots ,K^{(e)}_w(\cdot , u)\}$ of Theorem \ref{sourcethm}, we conclude that 
$$ \big ( K^{(1)}_{w}(\cdot, w), \cdots , K^{(e)}_{w}(\cdot, w) \big ) = \big ( K^{(1)}_{w_0}(\cdot, w), \cdots , K^{(d)}_{w_0}(\cdot, w) \big ) G(w)^\dagger.$$ 
Taking inner product of the vectors on both sides of this equation verifies the assertion of the Theorem.  
\end{proof}

For any fixed but arbitrary $w_0\in \mathcal Z$, let $d$ be the minimal set of generators $\overline{\mathcal S}_{w_0}$. Also, we have anti-holomorphic functions 
$K^{(i)}_{w_0}(\cdot, w)$, $1\leq i \leq d$, defined on some open neighbourhood $U_{w_0}$ of $w_0$. The real analytic function  $H(w) = \big ( \!\! \big ( H_{ij}(w) \big ) \!\! \big ): = \big ( \!\! \big ( \big \langle K^{(i)}_{w_0}(\cdot, w) , K^{(j)}_{w_0}(\cdot, w) \big \rangle \big ) \!\! \big )$ defines a hermitian structure on the locally free sheaf $\mathcal O(U_{w_0})^d$. Set $\mathcal P_{w_0}$ to be the holomorphic hermitian vector bundle of rank $d$ ($d$ depends on $w_0$) determined by the pair $(\mathcal O(U_{w_0})^d, H(w))$. 
Theorem 1.10 of \cite{BMP} says  that if two analytic sheaves  $\mathcal S$ and $\mathcal {\tilde S}$ are isomorphic (that is, there is a unitary module map between the modules $\mathcal M$ an $\tilde M$ they are associated with), then the holomorphic hermitian vector bundles $\mathcal P_{w_0}$ and $\tilde{\mathcal P}_{w_0}$ are equivalent (by shrinking $U_{w_0}$ and $\tilde{U}_{w_0}$) via an isometric bundle map for each fixed but arbitrary $w_0\in \mathcal Z$ on a common open neighbourhood $U^\prime_{w_0}$ of $w_0$. If $w$ is outside the zero set, then there is an open neighbourhood $U$ of $w$ in which the dimension of the stalk $\overbar{\mathcal S}_w$ is $1$ and the eigenvectors $K(\cdot, w)$ define a holomorphic hermitian bundle $E(U)$ on the open set $U$, this is the Cowen-Douglas bundle of $\mathcal M$ on $U$. Moreover, if the two analytic sheaves $\mathcal S$ and $\mathcal {\tilde S}$ are isomorphic, then these holomorphic hermitian bundles are (locally) equivalent. 

\begin{defn}
If the vector bundles $\mathcal P_w$, $E(U)$  (determined by $K$) are equivalent to the corresponding vector bundles $\tilde{\mathcal P}_w$, 
$\tilde{E}(U)$ (determined by $\tilde{K}$) for all $w\in \mathcal Z$ and some open $U \subseteq \Omega\setminus \mathcal Z$, then  we say that the hermitian sheaves $\mathcal S$ and $\mathcal {\tilde S}$ are equivalent. \end{defn}

In particular, for each $w\in\mathcal Z$, the curvature of the determinant bundle $\det(\mathcal P_w)$ induced by the hermitian structure $\mathbb H(w)$ defined on $U_{w}$ is an invariant for the analytic Hilbert module $\mathcal M$, or equivalently, that of the holomorphic hermitian sheaf $\mathcal S(K)$. Furthermore, if we assume that the zero variety $\mathcal Z$ is a complex manifold,  then we can strengthen what we have just said, see Corollary \ref{cor:4.6}.  In this case, the dimension of $\overbar{\mathcal S}_w$ is a constant, say $t > 1$, see Theorem \ref{mainthm}. Also, for any $w\in \mathcal Z$, fixing a minimal set of generators $\{p_1, \ldots, p_t\}$, there exists a uniquely determined  holomorphic frame $\{K^1, \ldots , K^t\}$ on some open neighbourhood $\Omega_w$ of $w$, again, see Theorem \ref{mainthm}. Therefore, the locally-free sheaf $\mathcal O(\Omega_w)^t$ with the holomorphic frame $\{K^1, \ldots , K^t\}$ defines a smooth hermitian structure $\mathbb H$ for all $w$ in $\Omega_w \cap \mathcal Z$ \textit{simultaneously}. As before, the curvature of the determinant bundle induced by $\det(\mathcal P)$ on the open set $\Omega_w$ is an invariant for the analytic Hilbert module $\mathcal M$. 
Summarizing this discussion, we list useful invariants for the holomorphic hermitian sheaf $\mathcal S$ associated to a reproducing kernel Hilbert space. 
\begin{thm}
The curvature of the determinant bundles $\det(\mathcal P_w)$, $w\in \mathcal Z$, are invariants for the holomorphic hermitian sheaf $\mathcal S$. 
\end{thm}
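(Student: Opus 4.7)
The plan is to exhibit the form $\partial\bar\partial \log \det \mathbb{H}(w)$ as an intrinsic invariant attached to the holomorphic hermitian sheaf $\mathcal{S}$ at each $w\in\mathcal{Z}$, by verifying two things: (a) the curvature form does not depend on the choice of minimal generators used to realise $\mathcal{P}_w$, and (b) it is preserved by the isomorphisms of holomorphic hermitian sheaves that come from unitary module maps. Since curvature is a complete invariant of equivalence classes of holomorphic hermitian line bundles, these two facts together give the claim.

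For step (a), given two minimal generating sets $\{g_i^{(0)}\}_{i=1}^d$ and $\{g_i'^{(0)}\}_{i=1}^d$ of the stalk $\mathcal{S}_{w_0}$, coherence together with minimality of both sides produces a holomorphic matrix $G\in \mathcal{M}_{d\times d}(\mathcal{O}_{w_0})$ relating them such that $G(w_0)$ is invertible; by holomorphic continuity, $G(w)$ remains invertible on some open neighbourhood $U_{w_0}$ of $w_0$. Specialising the computation of the preceding theorem to the situation where the new generating set is itself minimal (so that $e=d$) yields the transformation rule $\mathbb{H}'(w) = \overline{G(w)}\,\mathbb{H}(w)\,G(w)^{\dagger}$ for the two Grammians. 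Taking determinants gives $\det \mathbb{H}'(w) = |\det G(w)|^2 \det \mathbb{H}(w)$; since $\det G$ is holomorphic and non-vanishing on $U_{w_0}$, the function $\log |\det G|^2$ is pluriharmonic, so $\partial\bar\partial \log \det \mathbb{H}' = \partial\bar\partial \log \det \mathbb{H}$ on $U_{w_0}$. Hence the curvature of $\det(\mathcal{P}_{w_0})$ is well defined, independently of the choice of minimal generators.

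For step (b), suppose $\Theta:\mathcal{M}\to\tilde{\mathcal{M}}$ is a unitary module map, so that $\mathcal{S}\cong \tilde{\mathcal{S}}$ as coherent analytic sheaves. Theorem 1.10 of \cite{BMP} then furnishes, after suitable shrinking, an isometric bundle equivalence between $\mathcal{P}_{w_0}$ and $\tilde{\mathcal{P}}_{w_0}$ on a common open neighbourhood of $w_0$. Passing to top exterior powers yields an isometric equivalence of the determinant line bundles $\det(\mathcal{P}_{w_0})$ and $\det(\tilde{\mathcal{P}}_{w_0})$, and isometric equivalence of holomorphic hermitian line bundles forces equality of the induced $(1,1)$ curvature forms. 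Combining with step (a), the curvature of $\det(\mathcal{P}_w)$ depends only on the isomorphism class of the hermitian sheaf $\mathcal{S}$.

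The main obstacle is step (a): one has to know not merely that the change-of-generators matrix $G$ is invertible at the single point $w_0$ but that it is simultaneously holomorphic and everywhere invertible on a full neighbourhood, so that $\det G$ is holomorphic and nowhere zero and the pluriharmonicity argument is genuinely available. This rests on the coherence of $\mathcal{S}$ together with minimality of both generating sets, exactly in the form used in the preceding theorem. Once this point is secured, the rest of the argument reduces to the determinant-of-Grammian transformation rule plus an appeal to the BMP rigidity theorem, and no further analytic input is required.
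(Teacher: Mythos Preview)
Your overall strategy matches the paper's: the theorem is stated there as a summary of the preceding discussion, and that discussion boils down to invoking Theorem~1.10 of \cite{BMP} to obtain an isometric equivalence $\mathcal{P}_{w_0}\cong\tilde{\mathcal{P}}_{w_0}$, then passing to determinants. Your step~(b) is exactly this, so the core of your argument is correct and aligned with the paper.

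There is, however, a subtle slip in step~(a). You appeal to ``the preceding theorem'' (Theorem~3.2 in the paper) to obtain a transformation rule $\mathbb{H}'(w)=\overline{G(w)}\,\mathbb{H}(w)\,G(w)^{\dagger}$ valid for \emph{all} $w$ in a neighbourhood of $w_0$, with a single holomorphic invertible $d\times d$ matrix $G$. But that theorem compares $\mathbb{H}(w)$ with the \emph{intrinsic} metric at $w$, via a matrix whose size $e$ depends on $w$ (and drops to $1$ off the zero set). It does not directly relate two $d\times d$ Grammians coming from two choices of minimal generators at $w_0$. The deeper issue is that the decomposition $K(\cdot,w)=\sum_i\overline{g_i(w)}K^{(i)}(w)$ is unique only at $w_0$; for $w\neq w_0$ the $K^{(i)}(w)$, and hence $\mathbb{H}(w)$, depend on a choice, not merely on the generators. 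So to justify that $\mathcal{P}_{w_0}$ is well defined up to isometric equivalence one must also handle two different decompositions for the \emph{same} generators, and your Grammian computation does not cover that case.

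The fix is simply to absorb step~(a) into step~(b): Theorem~1.10 of \cite{BMP}, applied with the identity module map, already gives that any two realisations of $\mathcal{P}_{w_0}$ (from any choices of minimal generators and any admissible decompositions) are isometrically equivalent on a common neighbourhood. Your determinant and pluriharmonicity argument is then a pleasant but optional elaboration. The paper itself does not separate (a) from (b); it treats the whole thing as a consequence of the BMP rigidity theorem.
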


In what follows, we rework the $(\lambda, \mu)$ examples of \cite[4.1, p. 194]{BMP} using the determinant bundle and arrive at the same conclusion.

\begin{ex}
 For any pair $\lambda, \mu >0$, let $\mathcal H^{(\lambda,\mu)}$ be the Hilbert module of wrigted Bergman spaces on the bidisc $\mathbb D^2$ determined by the kernel function $\tfrac{1}{(1-\bar{w}z)^\lambda}\tfrac{1}{(1-\bar{w}z)^\mu}$. Let $\mathcal M \subseteq \mathcal H^{(\lambda, \mu)}$ be the submodule of functions vanishing at $\{(0,0)\}$.  The hermitian metric for the sheaf $\mathcal S^\mathcal M$ in a neighbourhood of the point $\{(0,0)\}$ was calculated 
in \cite{BMP}. Taking the trace of the curvature of this metric from the bottom of page 29 in \cite{BMP}, we see that the coefficients of $dz_1 \wedge d \bar{z}_1$ and $dz_2 \wedge d \bar{z}_2$ of the curvature $\mathcal K^{(\lambda,\mu)}(z)_{|z=(0,0)}$ for the determinant bundle $\mathcal P_{(0,0)}$ are given by the formula: 
$\tfrac{\lambda+1}{2} + \tfrac{\lambda\mu^2}{(\lambda+\mu)^2},$   $\tfrac{\mu+1}{2}+ \tfrac{\lambda^2\mu}{(\lambda+\mu)^2},$
respectively. Equating these coeffcients of $\mathcal K^{(\lambda,\mu)}(z)_{|z=(0,0)}$ and 
$\mathcal K^{(\lambda^\prime,\mu^\prime)}(z)_{|z=(0,0)}$, we obtain the pair of equations:
\begin{equation}\label{eqn:3.1}
\begin{aligned}
 \tfrac{\lambda+1}{2} + \tfrac{\lambda\mu^2}{(\lambda+\mu)^2} &=    
 \tfrac{\lambda^\prime +1}{2} + \tfrac{\lambda^\prime {\mu^\prime}^2}{(\lambda^\prime +\mu^\prime)^2} \\
    \tfrac{\mu+1}{2} + \tfrac{\lambda^2\mu}{(\lambda+\mu)^2} &=    
 \tfrac{\mu^\prime +1}{2} + \tfrac{{\lambda^\prime}^2 \mu^\prime}{(\lambda^\prime +\mu^\prime)^2} \\
 \end{aligned}
 \end{equation}
 Set $k = \frac{\lambda}{\mu}$ and $k' = \frac{\lambda'}{\mu'}$. Dividing the first equation by the second in \eqref{eqn:3.1}, we have that
    $$
    \frac{k\big \{\frac{1}{2} + \frac{1}{(1 + k)^2} \big\}}{\frac{1}{2} + \frac{k^2}{(1 + k)^2}} = \frac{k' \big \{\frac{1}{2} + \frac{1}{(1 + k')^2} \big \}}{\frac{1}{2} + \frac{{k'}^2}{(1 + k')^2}} = \alpha \mbox{~(say)},
    $$
    which simplifies to the equation 
    $k(1 + k)^2 + 2k = \alpha(1 + k)^2 + 2\alpha k^2$. This shows $k$ is a root of the equation 
    \begin{equation} \label{eqn:3.2}
    x^3 - (3\alpha - 2)x^2 - (2\alpha - 3)x - \alpha = 0.
    \end{equation} 
    We claim that this equation has only one positive root for every positive $\alpha$ and hence $k = k'$. From \eqref{eqn:3.1}, it follows that $\lambda = \lambda'$ and $\mu = \mu'$. Any cubic equation can have either one real root and two complex roots or three real roots. In the first case, we are done as product of the roots is $\alpha$. In case it has 3 real roots, since the product of the roots is positive, either there is  one positive root and two negative roots or all the three roots are positive. In the first case, discarding the two negative roots, we are done again. We claim that the second  case does not occur.  If $k_i,\, i = 1,2,3$ are positive roots of the equation \eqref{eqn:3.2}, then 
    $\sum k_i = 3\alpha -2$, $\sum_{i<j}k_ik_j = 3-2\alpha$ and $k_1k_2k_3 = \alpha$ and it follows that $\alpha$ lies in $[2/3, 3/2]$. However, the identity $\sum_{i\leq j} k_ik_j = 1/2\big \{\sum_{i<j}(k_i + k_j)^2\big \}$ gives that $\alpha$ must lie in $(1, 3/2]$. Now observing that $1/k$ is also a root of $x^3 - (3\beta - 2)x^2 - (2\beta - 3)x - \beta = 0$ with $\beta = 1/\alpha$, and using the same identity, we arrive at a contradiction that $\alpha$ can not be greater than $1$.
\end{ex}

\section{Finding a holomorphic frame in an open subset of the zero set}

Let $\Omega$ be a bounded domain in $\mathbb{C}^{m}$ and $\mathcal{M}$ be a Hilbert module over the polynomial ring $\mathbb{C}[z_{1},\ldots,z_{m}]$, in the class $\mathfrak{B}_{1}(\Omega)$. We construct a sheaf $\mathcal{S}^{\mathcal{M}}$ for the Hilbert module $\mathcal{M}$ as follows:
$$\mathcal{S}^{\mathcal{M}}(U)=\Big\{\sum_{i=1}^{n}(f_{i}|_{U})g_{i}:
f_{i}\in\mathcal{M},g_{i}\in\mathcal{O}(U),n\in\mathbb{N}\Big\},~U~\text{open in}~\Omega$$
or equivalently,
$$\mathcal{S}^{\mathcal{M}}_{w}=\big\{(f_{1})_{w}\mathcal{O}_{w}+\cdots+(f_{n})_{w}\mathcal{O}_{w}:
f_{1},\ldots,f_{n}\in\mathcal{M},n\in\mathbb{N}\big\},~w\in\Omega.$$
Clearly, $\mathcal{S}^{\mathcal{M}}$ is a subsheaf of the sheaf of holomorphic functions $\mathcal{O}_{\Omega}$. From
\cite[Proposition 2.1]{SB}, it follows that $\mathcal{S}^{\mathcal{M}}$ is coherent. In particular, for each fixed $w\in\Omega$, $\mathcal{S}^{\mathcal{M}}_{w}$ is generated by finitely many elements from $\mathcal{O}_{w}$. 

Now, suppose that $\mathcal{H}$ is an analytic Hilbert module and $\mathcal{M}$ is the closure of a polynomial ideal $\mathcal{I}$ in $\mathcal{H}$ generated by $\{p_{1},\ldots,p_{t}\}$. For each $w\in\Omega$, from \cite[Lemma 2.3.2]{chenguo}, it follows that  $\{(p_{1})_{w},\ldots,(p_{t})_{w}\}$ generates the stalk $\mathcal{S}^{\mathcal{M}}_{w}$. In the following lemma we provide a sufficient condition for the minimality of such a generating set. We  let $Z(p_1, \ldots, p_t)$
denote the common zero set of the polynomials $p_1, \ldots , p_t$.
\begin{lem}\label{mainlem}
If $V(\mathcal{M}):=Z(p_{1},\ldots,p_{t})\cap\Omega$ is a submanifold of codimension $t$, then
\begin{enumerate}
\item[\rm a)] $\{p_{1},\ldots,p_{t}\}$ is a minimal set of generators for  $\mathcal{I}$ and
\item[\rm b)] for each $w\in V(\mathcal{M})$, $(p_{1})_{w},\ldots,(p_{t})_{w}$ is a minimal generator of $\mathcal{S}^{\mathcal{M}}_{w}$.
\end{enumerate}
\end{lem}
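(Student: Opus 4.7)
Both parts will be established by contradiction, using Krull's Hauptidealsatz: in a Noetherian commutative ring, any proper ideal generated by $r$ elements has height at most $r$. I will invoke this in the polynomial ring $\mathbb{C}[\boldsymbol{z}]$ for part (a) and in the Noetherian analytic local ring $\mathcal{O}_w$ for part (b).

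For part (a), suppose $\{p_1,\ldots,p_t\}$ is not minimal. Then $\mathcal{I}$ admits a generating set $\{q_1,\ldots,q_s\}$ of strictly smaller cardinality $s < t$. By Krull's theorem the algebraic set $Z(q_1,\ldots,q_s)=Z(p_1,\ldots,p_t)$ has codimension at most $s$, i.e., its local dimension at every point is at least $m-s>m-t$. In particular the nonempty intersection $V(\mathcal{M})=Z(p_1,\ldots,p_t)\cap\Omega$ has local dimension at least $m-s>m-t$, contradicting the assumption that $V(\mathcal{M})$ is a submanifold of codimension $t$ (hence of dimension exactly $m-t$).

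For part (b), fix $w\in V(\mathcal{M})$ and assume toward a contradiction that $\{(p_1)_w,\ldots,(p_t)_w\}$ is not a minimal generating set of the stalk $\mathcal{S}^{\mathcal{M}}_w=(p_1)_w\mathcal{O}_w+\cdots+(p_t)_w\mathcal{O}_w$. Nakayama's lemma in the Noetherian local ring $(\mathcal{O}_w,\mathfrak{m}_w)$ then yields a nontrivial scalar relation $\sum_{i}c_i(p_i)_w\in\mathfrak{m}_w\mathcal{S}^{\mathcal{M}}_w$ with $c_i\in\mathbb{C}$ not all zero. Expanding the right-hand side and exploiting the fact that any element of $1+\mathfrak{m}_w$ is a unit in $\mathcal{O}_w$, I rearrange to express some $(p_k)_w$ as an $\mathcal{O}_w$-linear combination of the remaining $(p_i)_w$. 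Hence on some open neighbourhood $U$ of $w$ one has $V(\mathcal{M})\cap U = Z(p_1,\ldots,\widehat{p_k},\ldots,p_t)\cap U$, and Krull's Hauptidealsatz applied in $\mathcal{O}_w$ forces this analytic germ to have codimension at most $t-1$ at $w$, again contradicting the codimension $t$ submanifold hypothesis.

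The main technical hurdle is the passage from the abstract statement about minimal generators of an ideal or module to a concrete dimension-theoretic constraint on its zero set; Krull's theorem supplies this bridge in both the algebraic and analytic settings. A secondary subtlety in (b) is that Nakayama only gives linear dependence \emph{modulo} $\mathfrak{m}_w\mathcal{S}^{\mathcal{M}}_w$, not a literal $\mathcal{O}_w$-linear expression; converting one into the other requires the standard local-ring unit manipulation described above.
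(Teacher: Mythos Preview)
Your proof is correct and follows essentially the same contradiction strategy as the paper: if minimality fails, the common zero set is cut out by fewer functions and hence has codimension strictly less than $t$. The paper invokes the complex-analytic version of this bound (Chirka, \emph{Complex Analytic Sets}, \S3.5) in place of Krull's Hauptidealsatz, and in part~(b) it skips your Nakayama manoeuvre by noting directly that non-minimality of $\{(p_1)_w,\ldots,(p_t)_w\}$ already means some $(p_k)_w$ lies in the $\mathcal{O}_w$-ideal generated by the remaining germs.
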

\begin{proof}
Assume that $p_{t}=q_{1}p_{1}+\cdots+q_{t-1}p_{t-1}$, for $q_{1},\ldots,q_{t-1}\in\mathbb{C}[z_{1},\ldots,z_{m}]$. Then we have
$V(\mathcal{M})=Z(p_{1},\ldots,p_{t-1})\cap\Omega$. Since $p_{1}|_{\Omega},\ldots,p_{t-1}|_{\Omega}$ are $t-1$ holomorphic functions from $\Omega$ to $\mathbb{C}$, applying \cite[Section 3.5]{EMC} we obtain that $\mbox{codim}(V(\mathcal{M}))$ is at most $t-1$ which contradicts the hypothesis of the lemma. This proves part a).

To prove part b), assume that  there exists a point $w_{0}\in V(\mathcal{M})$ such that
$$(p_{t})_{w_{0}}=(a_{1})_{w_{0}}(p_{1})_{w_{0}}+\cdots+(a_{t-1})_{w_{0}}(p_{t-1})_{w_{0}},$$
where $a_{1},\ldots,a_{t-1}$ are holomorphic functions defined on some neighbourhood $N_{w_{0}}$ of $w_{0}$ in $\Omega$. Going to a smaller neighbourhood if necessary, we have $p_{t}=a_{1}p_{1}+\cdots+a_{t-1}p_{t-1}$ on $N_{w_{0}}$. As a result, $V(\mathcal{M})\cap N_{w_{0}}=Z(p_{1},\ldots,p_{t-1})\cap N_{w_{0}}$. Now, since $p_{1}|_{N_{w_{0}}},\ldots,p_{t-1}|_{N_{w_{0}}}$ are $t-1$ holomorphic functions on $N_{w_{0}}$ and $Z(p_{1}|_{N_{w_{0}}},\ldots,p_{t-1}|_{N_{w_{0}}})=V(\mathcal{M})\cap N_{w_{0}}$ is a submanifold, from \cite[Section 3.5]{EMC} it follows that $\mbox{codim}(V(\mathcal{M})\cap N_{w_{0}})$ is at most $t-1$. Again, this is a contradiction to the hypothesis saying $\mbox{codim}(V(\mathcal{M})\cap N_{w_{0}})=\mbox{codim}(V(\mathcal{M}))=t$.
\end{proof}

\begin{rem}
Suppose $Z(p_{1},\ldots,p_{t})$ is a complete intersection, that is, the tuple $(p_{1},\ldots,p_{t}):\Omega\to\mathbb{C}^{t}$ is a submersion at every point $w\in V(\mathcal{M})$. Then $V(\mathcal{M})$ is a submanifold of codimension $t$. In this case, we can give a more direct proof of Lemma \ref{mainlem} as follows.

If there exists a point $w_{0}\in V(\mathcal{M})$ such that $p_{t}=a_{1}p_{1}+\cdots+a_{t-1}p_{t-1}$ on $N_{w_{0}}$, then, for $j=1,\ldots,m,$
\[\begin{split}
\frac{\partial p_{t}}{\partial w_{j}}(w_{0})&=\frac{\partial(a_{1}p_{1})}{\partial w_{j}}(w_{0})+\cdots+\frac{\partial(a_{t-1}p_{t-1})}{\partial w_{j}}(w_{0})\\
&=a_{1}(w_{0})\frac{\partial p_{1}}{\partial w_{j}}(w_{0})+\cdots+a_{t-1}(w_{0})\frac{\partial p_{t-1}}{\partial w_{j}}(w_{0}).
\end{split}\]
As a result, the matrix
$$\begin{pmatrix}\frac{\partial p_{1}}{\partial w_{1}}(w_{0}) & \ldots & \frac{\partial p_{1}}{\partial w_{t}}(w_{0})\\
\vdots & & \vdots\\
\frac{\partial p_{t}}{\partial w_{1}}(w_{0}) & \ldots & \frac{\partial p_{t}}{\partial w_{t}}(w_{0})
\end{pmatrix}=\begin{pmatrix}\frac{\partial p_{1}}{\partial w_{1}}(w_{0}) & \ldots & \frac{\partial p_{1}}{\partial w_{t}}(w_{0})\\ \vdots & & \vdots\\
\frac{\partial p_{t-1}}{\partial w_{1}}(w_{0}) & \ldots & \frac{\partial p_{t-1}}{\partial w_{t}}(w_{0})\\
\sum_{i=1}^{t-1}a_{i}(w_{0})\frac{\partial p_{i}}{\partial w_{1}}(w_{0}) & \ldots & \sum_{i=1}^{t-1}a_{i}(w_{0})
\frac{\partial p_{i}}{\partial w_{t}}(w_{0})
\end{pmatrix}$$
has rank at most $t-1$. This contradicts the fact that $Z(p_{1},\ldots,p_{t})$ is a complete intersection at $w_{0}$.
\end{rem}

The following Theorem is a generalization of \cite[Theorem 1.5]{BMP} reproduced earlier in this paper as Theorem \ref{sourcethm}.
\begin{thm}\label{mainthm}
Let $\mathcal{H}\subseteq \mathcal{O}(\Omega)$ be an analytic Hilbert module for some bounded domain $\Omega$  in $\mathbb{C}^{m}$. Also, let $\mathcal{M}$ be a submodule of $\mathcal{H}$ of the form
$[\mathcal{I}]$, that is, $\mathcal M$ is the completion of some polynomial ideal  $\mathcal I\subseteq \mathcal H$ with generators $p_{1},\ldots,p_{t}$. Furthermore, assume that $V(\mathcal{M})$ is a submanifold of codimension $t$. Then there exist anti-holomorphic maps $F_{1},\ldots,F_{t}:V(\mathcal{M})\to \mathcal{M}$ such that we have the following.

\begin{enumerate}
\item[\rm1.] For each $w\in V(\mathcal{M})$, there exists a neighbourhood $\Omega_{w}$ of $w$ in $\Omega$, anti-holomorphic maps
$F^{1}_{\Omega_{w}},\ldots,F^{t}_{\Omega_{w}}:\Omega_{w}\to\mathcal{M}$ with the properties listed below.
\begin{enumerate}
\item[\rm a)] $F^{j}_{\Omega_{w}}(v)=F_{j}(v)$, for all $v\in V(\mathcal{M})\cap\Omega_{w},j\in\{1,\ldots,t\}$.
\item[\rm b)] $k_u = \sum_{j=1}^{t}\overline{p_{j}(u)}F^{j}_{\Omega_{w}}(u)$, for all $u\in\Omega_{w}$,  where $k_w:=K(\cdot,w)$, $w\in \Omega$, with $K$ being the reproducing kernel of the submodule $\mathcal M$.
 \item[\rm c)] $\{F^{1}_{\Omega_{w}}(u),\ldots,F^{t}_{\Omega_{w}}(u)\}$ is a linearly independent set for each $u\in\Omega_{w}$.
\end{enumerate}
\item[\rm2.] The set $\{F_{1},\ldots,F_{t}\}$ is uniquely determined by $\{p_{1},\ldots,p_{t}\}$, that is, if $G_{1},\ldots,G_{t}$ is another collection of anti-holomorphic maps from $V(\mathcal{M})$ to $\mathcal{M}$ satisfying \mbox{\rm1.\,a)} and \mbox{\rm1.\,b)}, then $G_{j}=F_{j}$, $1\leq j\leq t$.
\item[\rm3.] $M_{p}^{*}F_{j}(v)=\overline{p(v)}F_{j}(v)$, for all $j=1,\ldots,t$, $v\in V(\mathcal{M})$, where $M_{p}$ is the multiplication by the polynomial $p$.
\item[\rm4.] For each $v\in V(\mathcal{M})$, the linear span of the set of vectors $\{F_{1}(v),\ldots,F_{t}(v)\}$ in $\mathcal{M}$ is the joint kernel of $\mathcal{M}$ at $v$ and hence is independent of the choice of generators $p_{1},\ldots,p_{t}$.
\end{enumerate}
\end{thm}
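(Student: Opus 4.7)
The plan is to apply Theorem~\ref{sourcethm} pointwise along $V(\mathcal{M})$ to define each $F_j$, and then to use a syzygy/Koszul argument to show that these pointwise values agree with the anti-holomorphic local extensions furnished by Theorem~\ref{sourcethm} from any chosen base point, thereby producing anti-holomorphy of $F_j$ along the entire zero set.

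By Lemma~\ref{mainlem}, $\{p_1, \ldots, p_t\}$ is a minimal generating set of the stalk $\mathcal{S}^{\mathcal{M}}_{w_0}$ at every $w_0 \in V(\mathcal{M})$, so Theorem~\ref{sourcethm} applied with these generators supplies, on some open neighbourhood $\Omega_{w_0}$, anti-holomorphic maps $K^{(j)}_{w_0} : \Omega_{w_0} \to \mathcal{M}$, $1\le j \le t$, satisfying (1.b) and (1.c). I would then \emph{define} $F_j(v) := K^{(j)}_v(\cdot, v)$, i.e.\ using the base point $w_0 = v$ itself; the pointwise uniqueness of this value is exactly Theorem~\ref{sourcethm}.(iii).

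The main step is to show that \emph{every} family $\{K^{(j)}_{w_0}\}$ satisfying (1.b) on $\Omega_{w_0}$ agrees with $F_j$ on $V(\mathcal{M}) \cap \Omega_{w_0}$. Given two such families, the differences $D^{(j)}(u) := K^{(j)}_{w_0}(\cdot, u) - \tilde K^{(j)}_{w_0}(\cdot, u)$ satisfy $\sum_j \overline{p_j(u)}\, D^{(j)}(u) \equiv 0$ on $\Omega_{w_0}$. Pairing with an arbitrary $f \in \mathcal{M}$ and conjugating converts this into a holomorphic syzygy $(H^{(1)}, \ldots, H^{(t)})$ of $(p_1, \ldots, p_t)$ in $\mathcal{O}(\Omega_{w_0})$. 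Because $V(\mathcal{M}) = Z(p_1, \ldots, p_t)$ has codimension $t$, the tuple $(p_1, \ldots, p_t)$ forms a regular sequence in the local ring $\mathcal{O}_v$ at each $v \in V(\mathcal{M})$; by exactness of the Koszul complex, every such syzygy is a combination of trivial Koszul syzygies $p_i e_j - p_j e_i$, each of whose components vanishes at $v$. Hence $H^{(j)}(v) = 0$ for every $f$ and every $j$, which forces $D^{(j)}(v) = 0$. This simultaneously yields well-definedness of $F_j$ (independence of base point), anti-holomorphy of $F_j$ along $V(\mathcal{M})$ inherited from the local extensions $K^{(j)}_{w_0}$, and the uniqueness assertion~(2).

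For property~(3), applying Theorem~\ref{sourcethm}.(v) at the base point $w_0 = v$ itself gives $M_p^* F_j(v) = M_p^* K^{(j)}_v(\cdot, v) = \overline{p(v)} F_j(v)$; combined with linear independence at $u = v$ from (1.c), this shows $\{F_j(v)\} \subseteq \mathcal{J}(v)$ and $\dim \mathcal{J}(v) \ge t$. For property~(4), the matching upper bound $\dim \mathcal{J}(v) \le t$ follows from the identification $\mathcal{J}(v) \cong \mathcal{M} \otimes_{\mathbb{C}[\underline z]} \mathbb{C}_v$ of Section~2, whose dimension is bounded by the minimal number of generators $t$ of the stalk. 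The hard part will be the Koszul step --- translating the geometric hypothesis that $V(\mathcal{M})$ is a codimension $t$ submanifold into the algebraic fact that $(p_1, \ldots, p_t)$ is a regular sequence in $\mathcal{O}_v$ --- which is what ultimately trivializes the syzygies along the zero set and upgrades Theorem~\ref{sourcethm}'s pointwise uniqueness to genuine uniqueness along $V(\mathcal{M})$.
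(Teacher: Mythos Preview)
Your proposal is correct and follows the same overall architecture as the paper: apply Lemma~\ref{mainlem} to get minimality of $\{(p_1)_w,\ldots,(p_t)_w\}$ at every $w\in V(\mathcal M)$, invoke Theorem~\ref{sourcethm} locally, and then glue by showing that any two local decompositions agree on $V(\mathcal M)$; parts~3 and~4 are handled identically.

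The one genuine difference is in how you kill the syzygy on $V(\mathcal M)$. The paper does it by an elementary contrapositive: after (implicitly) pairing with some $f\in\mathcal M$ to obtain a scalar relation $\sum_j p_j\beta_j=0$ near $v$, if some $\beta_{j_0}(v)\ne 0$ then $(\beta_{j_0})_v$ is a unit in $\mathcal O_v$, so $(p_{j_0})_v$ lies in the ideal generated by the remaining $(p_i)_v$'s, contradicting the minimality conclusion of Lemma~\ref{mainlem}. You instead argue that $\mathrm{codim}\,V(\mathcal M)=t$ forces $(p_1,\ldots,p_t)$ to have height $t$ in the regular (hence Cohen--Macaulay) local ring $\mathcal O_v$, so it is a regular sequence, the Koszul complex is exact, every syzygy is a combination of the trivial Koszul relations $p_ie_j-p_je_i$, and these all vanish at $v$. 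Both arguments are valid; the paper's is more elementary because it feeds the codimension hypothesis only through Lemma~\ref{mainlem} and avoids the Cohen--Macaulay/Koszul machinery, while yours is more structural and makes the role of the codimension hypothesis algebraically explicit. What you flag as ``the hard part'' (codimension $t$ $\Rightarrow$ regular sequence) is exactly the step the paper sidesteps.
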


\begin{proof}
Pick an arbitrary point $w\in V(\mathcal{M})$. From Lemma \ref{mainlem}, $(p_{1})_{w},\ldots,(p_{t})_{w}$ is a minimal set of generator of $\mathcal{S}^
{\mathcal{M}}_{w}$. Consequently, there exists a neighbourhood $\Omega_{w}$ of $w$ in $\Omega$ such that for all $u\in\Omega_{w}$, $K(\cdot,u)=
\sum_{j=1}^{t}\overline{p_{j}(u)}F^{j}_{\Omega_{w}}(u)$, where $F^{1}_{\Omega_{w}},\ldots,F^{t}_{\Omega_{w}}$ are anti-holomorphic maps from $\Omega_{w}$ to $\mathcal{M}$ satisfying conditions (ii) to (v) of 
Theorem \ref{sourcethm}

Take $w_{1},w_{2}\in V(\mathcal{M})$ such that $\Omega_{w_{1}}\cap\Omega_{w_{2}}\cap V(\mathcal{M})$ is non-empty. Now, for each $u\in
\Omega_{w_{1}}\cap\Omega_{w_{2}}$ we have
$$K(\cdot,u)=\sum_{j=1}^{t}\overline{p_{j}(u)}F^{j}_{\Omega_{w_{1}}}(u)\quad\text{and}\quad
K(\cdot,u)=\sum_{j=1}^{t}\overline{p_{j}(u)}\tilde{F}^{j}_{\Omega_{w_{2}}}(u).$$
This implies
$$\sum_{j=1}^{t}\overline{p_{j}(u)}\big(F^{j}_{\Omega_{w_{1}}}(u)-\tilde{F}^{j}_{\Omega_{w_{2}}}(u)\big)=0.$$
For each $u\in\Omega_{w_{1}}\cap\Omega_{w_{2}}$, $1\leq j\leq t$, set $\alpha_{j}(u)=\overline{\big(F^{j}_{\Omega_{w_{1}}}(u)-\tilde{F}^{j}_{\Omega_{w_{2}}}(u)\big)}$ and note that $\sum_{j=1}^{t}p_{j}(u)\alpha_{j}(u)=0$. Now, fix an arbitrary $v\in \Omega_{w_{1}}\cap\Omega_{w_{2}}\cap V(\mathcal{M})$ and assume that $\alpha_{1}(v)\neq 0$. This gives $\sum_{j=1}^{t}(p_{j})_{v}(\alpha_{j}) _{v}=0$ in $\mathcal{O}_{\mathbb{C}^{m},v}$ and $(\alpha_{1})_{v}$ is an unit in $\mathcal{O}_{\mathbb{C}^{m},v}$. Consequently, $(p_{1})_{v}=-\sum_{j=2}^{t}\big((\alpha_{1})_{v}^{-1}(\alpha_{j})_{v}\big)(p_{j})_{v}$ which says that $\{(p_{1})_{v},\ldots,(p_{t})_{v}\}$ is not a minimal set of generators of $\mathcal{S}^{\mathcal{M}}_{v}$ contradicting Lemma \ref{mainlem}. Thus,
\begin{equation}\label{eqn1}
\alpha_{j}(v)=0\Leftrightarrow F^{j}_{\Omega_{w_{1}}}(v)=\tilde{F}^{j}_{\Omega_{w_{2}}}(v),\forall v\in
\Omega_{w_{1}}\cap\Omega_{w_{2}}\cap V(\mathcal{M}),1\leq j\leq t.
\end{equation}
Since $\{\Omega_{w}\cap V(\mathcal{M})\}_{w\in V(\mathcal{M})}$ is an open cover of $V(\mathcal{M})$, for each $j=1,\ldots,t$, we define
$F_{j}:V(\mathcal{M})\to\mathcal{M}$ as follows:
$$F_{j}|_{\Omega_{w}\bigcap V(\mathcal{M})}(v):=F^{j}_{\Omega_{w}}(v),\forall v\in\Omega_{w}\cap V(\mathcal{M}).$$
From \ref{eqn1} it follows that for each $j=1,\ldots,t$, $F_{j}$ is a well-defined, anti-holomorphic map satisfying 1.a),1.b) and 1.c).

\bigskip
To prove 2., assume that for each $w\in V(\mathcal{M})$, there exist a neighbourhood $N_{w}$ of $w$ in $\Omega$, anti-holomorphic maps $G^{1}_{N_{w}},\ldots,
G^{t}_{N_{w}}:N_{w}\to\mathcal{M}$ such that a) $G^{j}_{N_{w}}(v)=G_{j}(v)$, $\forall v\in N_{w}\cap V(\mathcal{M})$ and b) $K(\cdot,u)=
\sum_{j=1}^{t}\overline{p_{j}(u)}G^{j}_{N_{w}}(u)$, $\forall u\in N_{w}$. This gives
$$\sum_{j=1}^{t}\overline{p_{j}(u)}\big(F^{j}_{\Omega_{w}}(u)-G^{j}_{N_{w}}(u)\big)=0,\forall u\in\Omega_{w}\cap N_{w}.$$
Following similar arguments as given above, for each $v\in\Omega_{w}\cap N_{w}\cap V(\mathcal{M})$, we have $G_{j}(v)=
G^{j}_{N_{w}}(v)=F^{j}_{\Omega_{w}}(v)=F_{j}(v)$. In particular, $F_{j}(w)=G_{j}(w)$, for all $w\in V(\mathcal{M})$.

\bigskip
The proof of Part 3 is straightforward from condition (v) of 
Theorem \ref{sourcethm}
and from the observation that for each $w\in V(\mathcal{M})$, $F_{j}(w)=F^{j}_{\Omega_{w}}(w)$. From the same observation we obtain that $\text{Span}\{F_{1}(w),\ldots,F_{t}(w)\}$ is a subspace of $\cap_{p\in\mathbb{C}[\underline{z}]}\text{Ker}(M_{p}^{*}-
\overline{p(w)})$, for each $w\in V(\mathcal{M})$. Now, from 1.a) and 1.c) it follows that the dimension of this subspace is at least $t$. On the other hand, from \cite[Lemma 5.11]{DP} it follows that $\dim(\mathcal{M}\otimes_{\mathbb{C}[\underline{z}]}\mathbb{C}_{w})=
\dim\bigl(\cap_{p\in\mathbb{C}[\underline{z}]}\ker(M_{p}-p(w))^{*}\otimes\mathbb{C}\bigr)\leq t$. So, for each $w\in V(\mathcal{M})$, $\mbox{Span}\{F_{1}(w),\ldots,F_{t}(w)\}=\cap_{p\in\mathbb{C}[\underline{z}]}\ker(M_{p}^{*}-\overline{p(w)})$ proving Part 4.
\end{proof}

\begin{cor}\label{cor1}
Let $\mathcal{H}\subseteq \mathcal{O}(\Omega)$ be an analytic Hilbert module for some bounded domain $\Omega$  in $\mathbb{C}^{m}$. Also, let $\mathcal{M}$ be a submodule of $\mathcal{H}$ of the form
$[\mathcal{I}]$ and  $p_{1},\ldots,p_{t}$ be the generators of $\mathcal I$. Assume that $V(\mathcal{M})$ is a submanifold of codimension $t$. Then 
$$\dim \Big (\bigcap_{i=1}^{m}\ker(M_{z_{j}}-w_{j})^{*} \Big )=
\begin{cases}
1 & \mbox{\rm for}~w\notin V(\mathcal{I})\cap\Omega\\
\mbox{\rm codimension of}~V(\mathcal{I}) & \mbox{\rm for}~w\in V(\mathcal{I})\cap\Omega.
\end{cases}
$$
\end{cor}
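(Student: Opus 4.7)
The plan is to dispose of the two cases separately, with nearly all of the real work already packaged in Theorem \ref{mainthm} and the earlier observations relating the joint kernel to the module tensor product.

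First, I would recall the identification
\[
\dim \bigcap_{i=1}^{m} \ker(M_{z_i}-w_i)^{*} \;=\; \dim\bigl(\mathcal{M}\otimes_{\mathbb{C}[\underline{z}]}\mathbb{C}_{w}\bigr),
\]
which is the lemma from Section 2 applied to $\mathcal{M} = [\mathcal{I}]$. This turns the statement into a computation of the rank of the localization, which is exactly what has been controlled in the preceding sections.

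For the case $w \notin V(\mathcal{I})\cap\Omega$, I would invoke the lemma recalled from \cite[Lemma 1.3]{BMP}: when $\{p_1,\ldots,p_t\}$ is a minimal set of generators, $[\mathcal{I}]$ is locally free of rank $1$ on $(\Omega\setminus V_{[\mathcal{I}]})^{*}$. Minimality here is not an extra hypothesis because Lemma \ref{mainlem}\,a) guarantees it under the codimension-$t$ assumption on $V(\mathcal{M})$. Locally free of rank one means $\dim([\mathcal{I}]\otimes_{\mathbb{C}[\underline{z}]}\mathbb{C}_{w}) = 1$, so combining with the displayed identity above yields the first line of the dimension formula.

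For the case $w\in V(\mathcal{I})\cap\Omega$, the work is already in Theorem \ref{mainthm}. Part 4 of that theorem says exactly that
\[
\bigcap_{p\in\mathbb{C}[\underline{z}]}\ker(M_{p}^{*}-\overline{p(w)}) \;=\; \operatorname{Span}\{F_{1}(w),\ldots,F_{t}(w)\},
\]
and the vectors $F_{1}(w),\ldots,F_{t}(w)$ are linearly independent by Part 1.\,c) of the same theorem (since locally they agree with $F^{j}_{\Omega_{w}}(w)$, and these are linearly independent at every point of $\Omega_{w}$). Hence the joint kernel has dimension exactly $t$, which by hypothesis equals the codimension of $V(\mathcal{I})$.

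No single step should be an obstacle, since both cases are immediate corollaries of machinery already assembled; the only subtlety to flag is that the hypothesis ``$V(\mathcal{M})$ is a submanifold of codimension $t$'' is what simultaneously forces the generating set $\{p_1,\ldots,p_t\}$ to be minimal (so that the Lemma from \cite{BMP} on local freeness applies off the zero set) and pins down the joint kernel dimension on the zero set via Theorem \ref{mainthm}. I would therefore explicitly cite Lemma \ref{mainlem}\,a) at the point where minimality is used, to make the logical dependence transparent.
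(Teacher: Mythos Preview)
Your proposal is correct and follows essentially the same approach as the paper: Part 4 of Theorem \ref{mainthm} handles $w\in V(\mathcal{I})\cap\Omega$, and local freeness of rank $1$ off the zero set handles the complementary case. The only cosmetic difference is the citation for the latter --- the paper invokes \cite[Lemma 1.11]{SB} (via $\mathcal{M}\in\mathfrak{B}_1(\Omega)$) rather than the lemma from \cite{BMP} recalled in Section 2, so it does not need to appeal to Lemma \ref{mainlem}\,a) for minimality; but this is a matter of which packaged result one quotes, not a genuinely different argument.
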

\begin{proof}
For each $w\in V(\mathcal{M})=V(\mathcal{I})\cap\Omega$, from the proof of Part 4., Theorem \ref{mainthm}, we infer that 
$$\dim \Bigl(\bigcap_{i=1}^{m}\ker(M_{z_{j}}-w_{j})^{*}\Bigr)=\dim\Bigl
(\bigcap_{p\in\mathbb{C}[\underline{z}]}\ker(M_{p}^{*}-\overline{p(w)})\Bigr)=t.$$
Moreover, clearly $\mathcal{M}\in\mathfrak{B}_{1}(\Omega)$. Therefore, from \cite[Lemma 1.11]{SB}, it follows that $\mathcal{M}$ is locally free on $(\Omega\setminus
V(\mathcal{M}))^*$ with the result that  $\dim\big(\cap_{i=1}^{m}\ker(M_{z_{j}}-w_{j})^{*}\big)=1$, for $w\notin V(\mathcal{M})$.
\end{proof}

Observe that we do not need $\mathcal{I}$ to be a prime ideal to prove Corollary \ref{cor1}. This enables us to consider examples that satisfy the conjecture of Douglas, Misra and Varughese \cite{DMV} but does not follow from \cite[Theorem 2.3]{DG}. We will discuss an explicit example below to demonstrate this.
In what follows, we let $< \{p_1, \ldots , p_t\} >$ denote the ideal generated by the polynomials $p_1, \ldots , p_t$.
Consider the ideal $\mathcal{I}=<\{z_{1}z_{2},z_{1}-z_{2}\}>$ in $\mathbb{C}[z_{1},\ldots,z_{m}]$ and define $\mathcal{M}=[\mathcal{I}]$ in $\mathcal{H}=H^{2}
(\mathbb{D}^{m})$, where $m\geq 2$. If $z\in V(\mathcal{I})$, then $z_{1}z_{2}=0$ and $z_{1}-z_{2}=0$.  Thus, $V(\mathcal{I})\subseteq\{z_{1}=z_{2}=0\}$.  Furthermore, observe that $$\mathcal{I}\subseteq<\{z_{1},z_{1}-z_{2}\}>=<\{z_{1},z_{2}\}>.$$
Note that for two ideals $\mathcal{I}_{1},\mathcal{I}_{2}$ in $\mathbb{C}[z_{1},\ldots,z_{m}]$, if $\mathcal{I}_{1}\subseteq\mathcal{I}_{2}$, then
$V(\mathcal{I}_{2})\subseteq V(\mathcal{I}_{1.})$.
As a result, we obtain $V(\mathcal{I})=\{(z_1, \ldots, z_m):z_{1}=z_{2}=0\}$ which implies that $V(\mathcal{M})=\{(z_1, \ldots,z_m)\in \mathbb D^m:z_{1}=z_{2}=0\}$. So, by Corollary \ref{cor1} we have
$$\dim(\mathcal{M}\otimes_{\mathbb{C}[\underline{z}]}\mathbb{C}_{w})=\begin{cases} 1 & \mbox{\rm for}~w\notin V(\mathcal{I})\cap\mathbb{D}^{m},\\
2 & \mbox{\rm for}~w\in V(\mathcal{I}) \cap\mathbb{D}^{m}.\end{cases}$$
Now, we claim that $\mathcal{I}$ is not prime. To see this, it is enough to show that neither $z_{1}$ nor $z_{2}$ belongs to $\mathcal{I}$. Assume   $z_{1}\in\mathcal{I}$. Then, there exist $p_{1},p_{2}\in\mathbb{C}[z_{1},\ldots,z_{m}]$ such that
$$z_{1}=p_{1}z_{1}z_{2}+p_{2}(z_{1}-z_{2})$$
which implies
$$z_{1}(1-p_{2}-p_{1}z_{2})=-p_{2}z_{2}.$$
This means $z_{1}$ divides $p_{2}z_{2}$. But $z_{1}$ is a prime element of $\mathbb{C}[z_{1},\ldots,z_{m}]$ and $z_{1}$ does not divide $z_{2}$. So, $z_{1}$ divides $p_{2}$ and we will write $p_{2}=qz_{1}$, for some $q\in \mathbb{C}[z_{1},\ldots,z_{m}]$. Thus, we have
$$z_{1}=p_{1}z_{1}z_{2}+qz_{1}(z_{1}-z_{2}).$$
Finally, dividing both sides by $z_{1}$ we have
$$1=p_{1}z_{2}+q(z_{1}-z_{2}).$$
This is a contradiction because the right hand side vanishes at the origin whereas the left hand side does not. This proves $z_{1}\notin\mathcal{I}$. Following similar arguments one can show that $z_{2}\notin\mathcal{I}$.

\section{The vector bundle associated to the joint kernel and its curvature}

From Theorem \ref{mainthm} we obtain a rank $t$, trivial, anti-holomorphic bundle $E_{\mathcal{M}}$ on $V(\mathcal{M})=V(\mathcal{I})\cap\Omega$ corresponding to the set $\{p_{1},\ldots,p_{t}\}$ given by
$$E_{\mathcal{M}}=\bigsqcup_{w\in V(\mathcal{M})}<\{F_{1}(w),\ldots,F_{t}(w)\}>.$$
Since, for each $w\in V(\mathcal{M})$, $(E_{\mathcal{M}})_{w}$ is a subspace of $\mathcal{M}$, we can give a hermitian structure on $E_{\mathcal{M}}$ which is canonically induced by the inner product of $\mathcal{M}$. This observation leads to the following theorem.
\begin{thm}\label{thm2}
Let $\Omega$ be a bounded domain in $\mathbb{C}^{m}$ and $\mathcal{I}$ be a polynomial ideal with generators $p_{1},\ldots,p_{t}$. Also, let $\mathcal{H},\mathcal{H'}$ be analytic Hilbert modules in $\mathcal{O}(\Omega)$ and $\mathcal{M},\mathcal{M'}$ be the closure of $\mathcal{I}$ in $\mathcal{H},\mathcal{H'}$, respectively, with the property that $\mbox{\rm codim}V(\mathcal{M})=\mbox{\rm codim}V(\mathcal{M'})=t$. If the modules $\mathcal{M},\mathcal{M'}$ are "unitarily" equivalent, then we have the following:
\begin{enumerate}
\item[\rm (a)]$E_{\mathcal{M}}$ is equivalent to $E_{\mathcal{M'}}$, where $E_{\mathcal{M}},E_{\mathcal{M'}}$ are two bundles on $V(\mathcal{M})=V(\mathcal{M'})=V(\mathcal{I})\cap\Omega$ obtained from Theorem \mbox{\rm\ref{mainthm}} and the discussion above.
\item[\rm (b)]If $F_{\mathcal{M}}:=\{F_{1},\ldots,F_{t}\},F_{\mathcal{M'}}:=\{F_{1}^{'},\ldots,F_{t}^{'}\}$ are the global frames of $E_{\mathcal{M}},E_{\mathcal{M'}}$ respectively that are obtained from applying Theorem \mbox{\rm\ref{mainthm}} on $\mathcal{M},\mathcal{M'}$ with respect to the set $\{p_{1},\ldots,p_{t}\}$, then
    $$\mathcal{K}_{E_{\mathcal{M}}}(F_{\mathcal{M}})=\mathcal{K}_{E_{\mathcal{M'}}}(F_{\mathcal{M'}}).$$
    Here $\mathcal{K}_{E_{\mathcal{M}}}(F_{\mathcal{M}}),\mathcal{K}_{E_{\mathcal{M'}}}(F_{\mathcal{M'}})$ are the curvature matrices of $E_{\mathcal{M}},E_{\mathcal{M'}}$ with respect to the frames $F_{\mathcal{M}},F_{\mathcal{M'}}$, respectively.
\end{enumerate}
\end{thm}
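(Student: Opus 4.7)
The plan is to use the unitary module map $\theta\colon\mathcal{M}\to\mathcal{M}'$ to induce a fibrewise unitary between the joint-kernel bundles, and to show that in the canonical frames of Theorem~\ref{mainthm} this map acts by multiplication by a nowhere-vanishing anti-holomorphic scalar function; this pluriharmonic scalar factor in the Gram matrix will force the curvature matrices to coincide.

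First I would note that $\theta M_p=M'_p\theta$ implies $\theta M_p^*=M_p^{\prime*}\theta$, so for every $v\in\Omega$ the map $\theta$ sends $\bigcap_{p}\ker(M_p-p(v))^*$ unitarily onto $\bigcap_{p}\ker(M_p'-p(v))^*$. At each $v\in V(\mathcal{M})=V(\mathcal{M}')$ these joint kernels are precisely the fibres $(E_\mathcal{M})_v$ and $(E_{\mathcal{M}'})_v$ by Theorem~\ref{mainthm}(4), and this restriction gives a fibrewise unitary $\hat\theta\colon E_\mathcal{M}\to E_{\mathcal{M}'}$.

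Next I would study $\theta$ on kernel sections away from $V(\mathcal{M})$. Since $\mathcal{M}$ and $\mathcal{M}'$ are locally free of rank one on $(\Omega\setminus V(\mathcal{M}))^*$, for $u\notin V(\mathcal{M})$ one has $\theta K(\cdot,u)=\beta(u)\,K'(\cdot,u)$ for a scalar $\beta(u)$. Expressing $\beta$ as $\langle\theta K(\cdot,u),K'(\cdot,u_0)\rangle/\langle K'(\cdot,u),K'(\cdot,u_0)\rangle$ for a fixed $u_0\notin V(\mathcal{M})$ exhibits it as anti-holomorphic on $\Omega\setminus V(\mathcal{M})$, and unitarity of $\theta$ gives $|\beta(u)|^2=K(u,u)/K'(u,u)$. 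The decomposition theorem writes
$$K(u,u)=\sum_{i,j}\overline{p_i(u)}\,p_j(u)\,\langle F^{i}_{\Omega_w}(u),F^{j}_{\Omega_w}(u)\rangle,$$
and analogously for $K'(u,u)$ with the primed local sections; both inner-product matrices are positive-definite at points of $V(\mathcal{M})$ by the linear independence of the local frames, Theorem~\ref{mainthm}(1.c). Hence $|\beta|$ is bounded above and below near $V(\mathcal{M})$, and Riemann's extension theorem yields an anti-holomorphic, nowhere-vanishing extension of $\beta$ to all of $\Omega$.

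Substituting the two kernel decompositions into $\theta K(\cdot,u)=\beta(u)K'(\cdot,u)$ on a neighbourhood $\Omega_w$ of a point of $V(\mathcal{M})$ gives
$$\sum_{j=1}^t\overline{p_j(u)}\bigl(\theta F^{j}_{\Omega_w}(u)-\beta(u){F'}^{j}_{\Omega_w}(u)\bigr)=0.$$
Pairing with an arbitrary $g\in\mathcal{M}'$ and conjugating produces a holomorphic scalar identity $\sum_j p_j(u)\gamma_j(u)=0$; the minimality argument from the proof of Theorem~\ref{mainthm}(2), which rests on Lemma~\ref{mainlem}, then forces each $\gamma_j$ to vanish on $V(\mathcal{M})$, whence $\theta F_j(v)=\beta(v)F'_j(v)$ for every $v\in V(\mathcal{M})$. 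This proves (a), since $\hat\theta$ then identifies $F_\mathcal{M}$ with the anti-holomorphic scalar multiple $\beta\cdot F_{\mathcal{M}'}$. For (b), unitarity of $\theta$ yields $h_\mathcal{M}(v)=|\beta(v)|^2 h_{\mathcal{M}'}(v)$; since $\beta$ is anti-holomorphic and nowhere-vanishing, $\log|\beta|^2$ is pluriharmonic and $\partial\bar\partial\log|\beta|^2=0$, and a direct computation shows that such a pluriharmonic scalar rescaling of the Gram matrix leaves the curvature matrix unchanged, so $\mathcal{K}_{E_\mathcal{M}}(F_\mathcal{M})=\mathcal{K}_{E_{\mathcal{M}'}}(F_{\mathcal{M}'})$. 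The main obstacle is the extension of $\beta$ across $V(\mathcal{M})$, which in turn depends on matching the leading-order vanishing of the two reproducing kernels via Theorem~\ref{mainthm}.
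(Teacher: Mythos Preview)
Your argument is correct and follows the same architecture as the paper's proof: the unitary intertwiner maps joint kernels to joint kernels, acts on $K(\cdot,u)$ by an anti-holomorphic scalar multiplier off the zero set, this multiplier extends across $V(\mathcal{M})$, and then the uniqueness clause of Theorem~\ref{mainthm} (which you re-derive via the minimality argument of Lemma~\ref{mainlem}) forces $\theta F_j=\beta F'_j$, so the Gram matrices differ by $|\beta|^2$ and the curvatures coincide.

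The one tactical difference worth noting is the extension step. The paper splits into cases: for $t=1$ it appeals to the invariance of the first Chern form of a line bundle, and for $t\geq 2$ it invokes Hartogs' theorem (using $\operatorname{codim}V(\mathcal{M})\geq 2$) to extend the multiplier, citing \cite[Theorem~3.7]{CS} for its existence. You instead bound $|\beta|^2=K(u,u)/K'(u,u)$ above and below via the positive-definiteness of the two Gram matrices near $V(\mathcal{M})$ and then apply the Riemann removable-singularities theorem across a thin analytic set. Your route is slightly more self-contained and handles all $t\geq 1$ uniformly, at the cost of the extra eigenvalue estimate; the paper's route is shorter for $t\geq 2$ but needs the separate $t=1$ observation. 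Both yield the same conclusion $A(v)=\beta(v)I_{t\times t}$ (in the paper's notation, $\overline{\phi(v)}I_{t\times t}$), from which the curvature identity follows immediately.
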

\begin{proof}
From Theorem \ref{mainthm}, it follows that for each $w\in V(\mathcal{I})\cap\Omega$, $(E_{\mathcal{M}})_{w}=\cap_{i=1}^{m}\ker(M_{z_{i}}-w_{i})^{*}$ and
$(E_{\mathcal{M'}})_{w}=\cap_{i=1}^{m}\ker(M^{'}_{z_{i}}-w_{i})^{*}$, where $M_{z_{i}},M^{'}_{z_{i}}$ are pointwise multiplication by $z_{i}$ on $\mathcal{M},\mathcal{M'}$ respectively. If $L:\mathcal{M}\to\mathcal{M'}$ is an unitary module map, then, for any $g\in\cap_{i=1}^{m}
\ker(M_{z_{i}}-w_{i})^{*}$, $(M^{'}_{z_{i}})^{*}(Lg)=L(M_{z_{i}}^{*}g)=L(\bar{w}_{i}g)=\bar{w}_{i}(Lg)$. Thus,
$$L\Big(\cap_{i=1}^{m}\ker(M_{z_{i}}-w_{i})^{*}\Big)\subseteq\cap_{i=1}^{m}\ker(M^{'}_{z_{i}}-w_{i})^{*}.$$
Similarly, it can be shown that
$$L^{-1}\Big(\cap_{i=1}^{m}\ker(M^{'}_{z_{i}}-w_{i})^{*}\Big)\subseteq\cap_{i=1}^{m}\ker(M_{z_{i}}-w_{i})^{*},$$
which is equivalent to
$$\cap_{i=1}^{m}\ker(M^{'}_{z_{i}}-w_{i})^{*}\subseteq L\Big(\cap_{i=1}^{m}\ker(M_{z_{i}}-w_{i})^{*}\Big).$$
Consequently, $L\big(\cap_{i=1}^{m}\ker(M_{z_{i}}-w_{i})^{*}\big)=\cap_{i=1}^{m}\ker(M^{'}_{z_{i}}-w_{i})^{*}$ and hence $L$ is an isometric isomorphism between $(E_{\mathcal{M}})_{w}$ and $(E_{\mathcal{M'}})_{w}$ for all $w\in V(\mathcal{I})\cap\Omega$. Moreover, there exists an anti-holomorphic map $A:V(\mathcal{I})\cap\Omega\mapsto GL(t,\mathbb{C})$ such that for each $w$, the following matrix equality is true:
\begin{equation}\label{eqn2}
\begin{bmatrix}LF_{1}(w) & \cdots & LF_{t}(w)\end{bmatrix}=\begin{bmatrix}F_{1}^{'}(w) & \cdots & F_{t}^{'}(w)\end{bmatrix}A(w).
\end{equation}
Thus, $L$ induces a bundle isomorphism between $E_{\mathcal{M}}$ and $E_{\mathcal{M'}}$ which proves part (a).

\bigskip
Next, observe that part (b) follows trivially from part (a) when $t=1$. This is because from part (a) we obtain the equality of the first Chern forms on $E_{\mathcal{M}},E_{\mathcal{M'}}$ given by $c_{1}(E_{\mathcal{M}},N_{\mathcal{M}})=c_{1}(E_{\mathcal{M'}},N_{\mathcal{M'}})$ which implies $\frac{i}{2\pi}
\mathcal{K}_{E_{\mathcal{M}}}(F_{\mathcal{M}})=\frac{i}{2\pi}\mathcal{K}_{E_{\mathcal{M'}}}(F_{\mathcal{M'}})
\Leftrightarrow \mathcal{K}_{E_{\mathcal{M}}}(F_{\mathcal{M}})=\mathcal{K}_{E_{\mathcal{M'}}}(F_{\mathcal{M'}})$. Here $N_{\mathcal{M}},N_{\mathcal{M'}}$ are the hermitian metrics on $E_{\mathcal{M}},E_{\mathcal{M'}}$ induced by the inner products of $\mathcal{M},\mathcal{M'}$, respectively.

\bigskip
For the case where $t\geq 2$, note that $\mathcal{M},\mathcal{M'}\in\mathfrak{B}_{1}(\Omega)$. As a result, following \cite[Lemma 1.11]{SB} we obtain that the reproducing kernels of $\mathcal{M},\mathcal{M'}$ are sharp on $\Omega\setminus V(\mathcal{I})$. Since $L$ is unitary, there exists a non-vanishing, holomorphic function $\phi$ on $\Omega\setminus V(\mathcal{I})$ such that
\begin{equation}\label{eqn3}
LK(\cdot,w)=\overline{\phi(w)}K'(\cdot,w),
\end{equation}
for all $w\in\Omega\setminus V(\mathcal{I})$ \cite[Theorem 3.7]{CS}, where $K,K'$ are the reproducing kernels of $\mathcal{M},\mathcal{M'}$, respectively. But $\mbox{codim}V(\mathcal{M})\geq 2$. So, by the Hartog's Extension Theorem \cite[Page 198]{SL} $\phi$ can be uniquely extended to $\Omega$ as a holomorphic function. Since an analytic function is unambiguously determined from its definition on any open set,  it  follows that Equation \eqref{eqn3} is true for all $w\in\Omega$.

Now, fix an arbitrary point $w_{0}\in V(\mathcal{I})\cap\Omega$. Then, from condition b) of Theorem \ref{mainthm} we have
\begin{equation}\label{eqn4}
K(\cdot,u)=\sum_{j=1}^{t}\overline{p_{j}(u)}F^{j}_{\Omega_{w_{0}}}(u)~\mbox{and}~
K'(\cdot,u)=\sum_{j=1}^{t}\overline{p_{j}(u)}{F'}^{j}_{\Omega_{w_{0}}}(u),
\end{equation}
for all $u\in\Omega_{w_{0}}$. Applying $L$ to the first equality of the Equation \eqref{eqn4} we obtain that
$$\overline{\phi(u)}K'(\cdot,u)=\sum_{j=1}^{t}\overline{p_{j}(u)}LF^{j}_{\Omega_{w_{0}}}(u)$$
or, equivalently,
$$K'(\cdot,u)=\sum_{j=1}^{t}\overline{p_{j}(u)}\frac{LF^{j}_{\Omega_{w_{0}}}(u)}{\overline{\phi(u)}}.$$
Since, $v\mapsto\frac{LF_{j}(v)}{\overline{\phi(v)}},v\mapsto F^{'}_{j}(v)$ are anti-holomorphic maps from $V(\mathcal{M'})$ to $\mathcal{M'}$ for each $j=1,\ldots,t$, satisfying 1.a) and 1.b) of Theorem \ref{mainthm}, by the condition 2 of the same theorem, we have $LF_{j}(v)=\overline{\phi(v)}F^{'}_{j}(v)$, for all $v\in V(\mathcal{I})\cap\Omega$. As a result, from Equation \eqref{eqn2} we have $A(v)=\overline{\phi(v)}I_{t\times t}$, for all $v\in V(\mathcal{I})\cap\Omega$. Finally,
\[\begin{split}\big(\mathcal{K}_{E_{\mathcal{M}}}(F_{\mathcal{M}})\big)(v)&=
\partial\Big(\big(N_{\mathcal{M}}(F_{\mathcal{M}})\big)^{-1}\bar\partial \big(N_{\mathcal{M}}(F_{\mathcal{M}})\big)\Big)(v)\\
&=\partial\Big(\big(N_{\mathcal{M'}}(LF_{\mathcal{M}})\big)^{-1}\bar\partial\big(N_{\mathcal{M'}}
(LF_{\mathcal{M}})\big)\Big)(v)\\
&=\big(\mathcal{K}_{E_{\mathcal{M'}}}(LF_{\mathcal{M}})\big)(v)\\
&=\big(\mathcal{K}_{E_{\mathcal{M'}}}(F_{\mathcal{M'}}A)\big)(v)\\
&=A(v)^{-1}\cdot\big(\mathcal{K}_{E_{\mathcal{M'}}}(F_{\mathcal{M'}})\big)(v)\cdot A(v)\\
&=\big(\mathcal{K}_{E_{\mathcal{M'}}}(F_{\mathcal{M'}})\big)(v),
\end{split}\]
which proves part (b) of the Theorem.
\end{proof}
The following example demonstrates the use of the curvature invariant given in Theorem \ref{thm2}(b). We first consider the case of a principal ideal and follow it up with the case of ideals that are not principal.
\begin{ex}
Let $\mathcal M = \{f\in \mathcal H^{\lambda, \mu}(\mathbb D^2): f_{|{z_1=0}} = \partial_1f_{|{z_1=0}} = \cdots = \partial_1^{p - 1} f_{|{z_1=0}} = 0\}$. By \cite[Lemma 4.14]{GM}, $\mathcal M$ is the closure $[\mathcal I]$ of the ideal $\mathcal I = <z_1^p>$ in $\mathcal H^{\lambda, \mu}(\mathbb D^2)$. On the zero set $\{(0, w_2): w_2\in \mathbb D\}$, the frame is given by
$$
F_1(z, (0, w_2)) = \frac{(\lambda)_p z_1^p}{p!(1 - z_2\bar w_2)^\mu},
$$
and hence
$$
\|F_1(z, (0, w_2))\|^2 =\frac{(\lambda)_p}{p!}\sum_{n\geq 0} \frac{(\mu)_n}{n!} |w_2|^{2n},
$$
where $(\lambda)_p = \lambda(\lambda + 1)\ldots (\lambda + p -1)$ is the Pochhammer symbol. The curvature at $0$ relative to this frame  is
\begin{equation}\label{cc}
 \partial\bar\partial \log \|F_1(z, (0, w_2))\|^2_{\{w_2 = 0\}} = \frac{\mu (\lambda)_p}{p!}.
\end{equation}

We note that $\mathcal I_k = <z_1^{k + p}>$ is contained in $\mathcal I$ for each $k\in\mathbb N$. If the completion $[\mathcal I]$ of the ideal $\mathcal I$ in $\mathcal H^{\lambda,\mu}(\mathbb D^2)$ is equivalent to the completion of $[\mathcal I]^\prime$ of the ideal $[\mathcal I]$ taken in $\mathcal H^{\lambda^\prime,\mu^\prime}(\mathbb D^2)$, then the completion $[\mathcal I_k]$ of $\mathcal I_k$ in $\mathcal H^{\lambda,\mu}(\mathbb D^2)$ is equivalent to the completion $[\mathcal I_k]^\prime$ of $\mathcal I_k$ in $\mathcal H^{\lambda^\prime,\mu^\prime}(\mathbb D^2)$. This follows from the fact that 
the kernel $K_{[\mathcal I]}$ and $K_{[\mathcal I]^\prime}$ factor:  $K_{[\mathcal I]}(z,w) = z_1^p\chi(z, w)\bar w_1^p$ and $K_{[\mathcal I]^\prime}(z,w) = z_1^p\chi^\prime(z, w)\bar w_1^p$.  Thus if $[\mathcal I]$ and $[\mathcal I]^\prime$ are equivalent, then there exists a non-zero holomorphic map $\varphi$ on $\mathbb D^2$ that induces the unitary module map between $\mathcal I]$ and $[\mathcal I]^\prime$. Following the same idea as in the  the proof of \cite[Lemma 4.1]{BM}, $[\mathcal I_k]$ and $[\mathcal I_k]^\prime$ are equivalent via the module map induced by $\varphi$. By considering the case $k=1$, from Theorem \ref{thm2} and the curvature computation in Equation (\ref{cc}), it follows that
$
\mu (\lambda)_p = \mu^\prime (\lambda^\prime)_p \mbox{~and~} \mu (\lambda)_{p + 1} = \mu^\prime (\lambda^\prime)_{p + 1}.
$
Thus, we have $\lambda = \lambda^\prime$ and $\mu = \mu^\prime$.

Consider the submodule $[\mathcal I]$ that is the completion of the ideal $\mathcal I = <z_1^{i_1}, \ldots, z_m^{i_m}>$ in $\mathcal H^{\boldsymbol{\lambda}} (\mathbb D^n)$ for some $m < n$ and   $\boldsymbol{\lambda} = (\lambda_1, \ldots, \lambda_n)$. The reproducing kernel of $\mathcal H^{\boldsymbol{\lambda}} (\mathbb D^n)$ is  $1/\prod_{i=1}^n (1 - z_i\bar w_i)^{\lambda_i}$. On the zero set $\{(0,, \ldots,0, w_{m + 1},\ldots, w_n)\in\mathbb C^n: w_i\in \mathbb D, m + 1\leq i\leq n\}$, the frame is given by
$$
F_k(z, (0,, \ldots,0, w_{m + 1},\ldots, w_n)) = \frac{(\lambda_k)_{i_k} z_1^{i_k}}{{i_k}!\prod_{i = m + 1}^n(1 - z_i\bar w_i)^{\lambda_i}},
$$
and hence
$$
\|F_k(z, (0,, \ldots,0, w_{m + 1},\ldots, w_n))\|^2 =\frac{(\lambda_k)_{i_k}}{{i_k}!\prod_{i = m + 1}^n(1 - |w_i|^2)^{\lambda_i}},
$$
for $1\leq k\leq m$. Since $\langle F_i, F_j\rangle = 0$ for $i\neq j$, the metric is a $m\times m$ diagonal matrix with diagonals $\|F_k\|^2$ as above. Thus, we just need to look at $$
\frac{\partial^2}{\partial w_i \partial \bar w_i} \log \|F_1(z, (0,, \ldots,0, w_{m + 1},\ldots, w_n))\|^2
$$ for $1\leq k\leq m$ and $m + 1\leq i\leq n$ as the mixed derivatives are zero at the origin. So the curvature computation at the origin and calculation similar to the case of the principal ideal, we have the completion $[\mathcal I]$ of the ideal $\mathcal I$ in $\mathcal H^{\boldsymbol{\lambda}} (\mathbb D^n)$ and the completion $[\mathcal I]^\prime$ in  $H^{\boldsymbol{\lambda^\prime}} (\mathbb D^n)$ are equivalent if and only if $\boldsymbol{\lambda} = \boldsymbol{\lambda^\prime}$.
\end{ex}

\section{On the relationship of curvature with the generators}Two different sets of generators for an ideal $\mathcal I$ give rise to two distinct  holomorphic frames for the holomorphic hermitian  vector bundle associated to the completion $[\mathcal I]$ in the Hilbert module $\mathcal H$. The curvature computed relative to these frames are equivalenrt via an invertible map which is explicitly computed below in the case when $\Omega$ is a bounded domain containing the zero vector in $\mathbb{C}^{m}$ and  $\{p_{1},\ldots,p_{t}\}$, $\{q_{1},\ldots,q_{t}\}$ are two sets of generators of the polynomial ideal $\mathcal{I}$ consisting of homogeneous polynomials of the same degree. Let $\mathcal{H}$ be an analytic Hilbert module in $\mathcal{O}(\Omega)$ and $\mathcal{M}$ be the closure of $\mathcal{I}$ in $\mathcal{H}$ with the property that $\mbox{codim}V(\mathcal{M})=t$. Then, by \cite[Lemma 4.7]{SB} we obtain that
$$\big\{p_{1}(\bar{D})K(\cdot,w)|_{w=0},\ldots,p_{t}(\bar{D})K(\cdot,w)|_{w=0}\big\}~\mbox{and}~
\big\{q_{1}(\bar{D})K(\cdot,w)|_{w=0},\ldots,q_{t}(\bar{D})K(\cdot,w)|_{w=0}\big\}$$
are two bases of $\ker D_{M^{*}}$. As a result, from \cite[Lemma 4.2]{SB} and \cite[Proposition 4.11]{SB} it follows that there exists a constant invertible matrix $A=(a_{ij})_{i,j=1}^{t}$ such that,
\begin{equation}\label{eqn5}
q_{j}=\sum_{i=1}^{t}a_{ij}p_{i},~1\leq j\leq t.
\end{equation}
Applying Theorem \ref{mainthm} separately with the choice of   $\{p_{1},\ldots,p_{t}\}$ and $\{q_{1},\ldots,q_{t}\}$, we obtain the sets $\{F_{1}^{p},\ldots,F_{t}^{p}\},\{F_{1}^{q},\ldots,F_{t}^{q}\}$, respectively which consist of the anti-holomorphic maps from $V(\mathcal{M})$ to $\mathcal{M}$ satisfying conditions 1) to 4) of the theorem. Now, we claim the following.
\begin{lem}
For each $w\in V(\mathcal{M})$, $\begin{bmatrix}F_{1}^{p}(w) &\ldots & F_{t}^{p}(w)\end{bmatrix}=\begin{bmatrix}F_{1}^{q}(w) &\ldots & F_{t}^{q}(w)\end{bmatrix} A^{*}$.
\end{lem}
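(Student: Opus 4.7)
The plan is to invoke the uniqueness statement in Theorem \ref{mainthm} after rewriting the kernel decomposition associated with $\{q_1,\ldots,q_t\}$ in terms of the generators $\{p_1,\ldots,p_t\}$. Fix an arbitrary $w_0\in V(\mathcal M)$. By Theorem \ref{mainthm} applied to the generating set $\{p_1,\ldots,p_t\}$, there is an open neighbourhood $\Omega_{w_0}$ of $w_0$ and anti-holomorphic maps $F^{j,p}_{\Omega_{w_0}}:\Omega_{w_0}\to\mathcal M$, $1\leq j\leq t$, with
\[K(\cdot,u)=\sum_{j=1}^{t}\overline{p_{j}(u)}\,F^{j,p}_{\Omega_{w_0}}(u),\qquad u\in\Omega_{w_0},\]
and similarly, applying the theorem to $\{q_1,\ldots,q_t\}$, there exist anti-holomorphic maps $F^{j,q}_{\Omega_{w_0}}$ on a (possibly smaller) neighbourhood, still denoted $\Omega_{w_0}$, satisfying
\[K(\cdot,u)=\sum_{j=1}^{t}\overline{q_{j}(u)}\,F^{j,q}_{\Omega_{w_0}}(u),\qquad u\in\Omega_{w_0}.\]
Both families restrict to $F_j^p$ and $F_j^q$ on $V(\mathcal M)\cap\Omega_{w_0}$ respectively.

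Next, substitute the relation \eqref{eqn5}, namely $q_{j}=\sum_{i=1}^{t}a_{ij}p_{i}$, into the second identity after conjugation. This yields
\[K(\cdot,u)=\sum_{j=1}^{t}\Bigl(\sum_{i=1}^{t}\overline{a_{ij}}\,\overline{p_{i}(u)}\Bigr)F^{j,q}_{\Omega_{w_0}}(u)=\sum_{i=1}^{t}\overline{p_{i}(u)}\Bigl(\sum_{j=1}^{t}\overline{a_{ij}}\,F^{j,q}_{\Omega_{w_0}}(u)\Bigr).\]
Set $G^{i}_{\Omega_{w_0}}(u):=\sum_{j=1}^{t}\overline{a_{ij}}\,F^{j,q}_{\Omega_{w_0}}(u)$. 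Since the $a_{ij}$ are constants, each $G^{i}_{\Omega_{w_0}}$ is anti-holomorphic on $\Omega_{w_0}$, and the display above shows that $\{G^1_{\Omega_{w_0}},\ldots,G^t_{\Omega_{w_0}}\}$ satisfies condition 1.b) of Theorem \ref{mainthm} with respect to the generators $\{p_1,\ldots,p_t\}$; moreover, invertibility of $A$ (so that $A^*$ is invertible) together with condition 1.c) for $F^{j,q}_{\Omega_{w_0}}$ ensures that the $G^i_{\Omega_{w_0}}$ are linearly independent, hence their restriction to $V(\mathcal M)\cap\Omega_{w_0}$ is well-defined.

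Now the uniqueness part (2) of Theorem \ref{mainthm}, applied to the family associated with $\{p_1,\ldots,p_t\}$, forces $F^p_i(v)=\sum_{j=1}^{t}\overline{a_{ij}}\,F^q_j(v)$ for every $v\in V(\mathcal M)\cap\Omega_{w_0}$. Since $w_0\in V(\mathcal M)$ was arbitrary, this identity holds on all of $V(\mathcal M)$. Writing this as a row-vector identity, the coefficient relating the $i$-th entry of $(F_1^p,\ldots,F_t^p)$ to the row $(F_1^q,\ldots,F_t^q)$ is $(A^*)_{ji}=\overline{a_{ij}}$, that is,
\[\begin{bmatrix}F_{1}^{p}(w) & \cdots & F_{t}^{p}(w)\end{bmatrix}=\begin{bmatrix}F_{1}^{q}(w) & \cdots & F_{t}^{q}(w)\end{bmatrix}A^{*},\qquad w\in V(\mathcal M),\]
which is the asserted equality. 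The only delicate point is verifying that the combinations $G^i_{\Omega_{w_0}}$ genuinely qualify as candidates for the uniqueness statement — this is what the invertibility of $A$ (guaranteed since both sets generate $\mathcal I$) delivers — but once this is in place, the conclusion is immediate.
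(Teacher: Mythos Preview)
Your proof is correct and follows essentially the same route as the paper's: fix $w_0\in V(\mathcal M)$, substitute $q_j=\sum_i a_{ij}p_i$ into the $q$-decomposition of $K(\cdot,u)$, regroup to obtain a $p$-decomposition with coefficients $\sum_j\bar a_{ij}F^{j,q}_{\Omega_{w_0}}$, and invoke the uniqueness clause (part~2) of Theorem~\ref{mainthm}. One minor remark: the uniqueness statement only requires conditions 1.a) and 1.b), so your verification of linear independence of the $G^i_{\Omega_{w_0}}$ via invertibility of $A$ is not actually needed---the paper omits it for this reason---but it does no harm.
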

\begin{proof}
Fix an arbitrary point $w_{0}\in V(\mathcal{M})$. Then from condition 1 of Theorem \ref{mainthm} there exist a neighbourhood $\Omega_{w_{0}}$ of $w_{0}$ in $\Omega$, anti-holomorphic maps $F^{k}_{\Omega_{w_{0}},p},F^{k}_{\Omega_{w_{0}},q}:V(\mathcal{M})\to\mathcal{M}$, $k=1,\ldots,t$ such that
\begin{enumerate}
\item[a)]$F^{k}_{\Omega_{w_{0}},p}(v)=F_{k}^{p}(v),F^{k}_{\Omega_{w_{0}},q}(v)=F_{k}^{q}(v)$, for all $v\in V(\mathcal{M})\cap\Omega_{w_{0}}$, $k=1,\ldots,t$ and
\item[b)]$K(\cdot,u)=\sum_{i=1}^{t}\overline{p_{i}(u)}F^{i}_{\Omega_{w_{0}},p}(u)$, $K(\cdot,u)=\sum_{j=1}^{t}\overline{q_{j}(u)}F^{j}_{\Omega_{w_{0}},q}(u)$, for all $u\in\Omega_{w_{0}}$.
\end{enumerate}
Now, applying Equation \eqref{eqn5} to the second equality of b) we obtain that
$$K(\cdot,u)=\sum_{i=1}^{t}\overline{p_{i}(u)}\Big(\sum_{j=1}^{t}\bar{a}_{ij}F^{j}_{\Omega_{w_{0}},q}(u)\Big),$$
for all $u\in\Omega_{w_{0}}$. Finally, observe that, for each $i\in\{1,\ldots,t\}$, $v\mapsto\sum_{j=1}^{t}\bar{a}_{ij}F_{j}^{q}(v)$ is an anti-holomorphic map from $V(\mathcal{M})\to\mathcal{M}$ satisfying conditions 1.a) and 1.b) of Theorem \ref{mainthm} with respect to the set $\{p_{1},\ldots,p_{t}\}$. So, from condition 2 of Theorem \ref{mainthm}, it follows that, for each $i=1,\ldots,t$, $v\in V(\mathcal{M})$,
$$F_{i}^{p}(v)=\sum_{j=1}^{t}\bar{a}_{ij}F_{j}^{q}(v),$$
proving the lemma.
\end{proof}
Note that each of the sets $\{F_{1}^{p},\ldots,F_{t}^{p}\}$ and $\{F_{1}^{q},\ldots,F_{t}^{q}\}$ canonically induces an anti-holomorphic frame of $E_{\mathcal{M}}$ on $V(\mathcal{M})$. If we denote the frames as $F_{\mathcal{M}}^{p}$ and $F_{\mathcal{M}}^{q}$ respectively, then we have
$$\mathcal{K}_{E_{\mathcal{M}}}(F_{\mathcal{M}}^{p})=\mathcal{K}_{E_{\mathcal{M}}}(F_{\mathcal{M}}^{q}A^{*})=
(A^{*})^{-1}\cdot
\mathcal{K}_{E_{\mathcal{M}}}(F_{\mathcal{M}}^{q})\cdot A^{*},$$
where $\mathcal{K}_{E_{\mathcal{M}}}(F_{\mathcal{M}}^{p}),\mathcal{K}_{E_{\mathcal{M}}}(F_{\mathcal{M}}^{q})$ are the curvature matrices of the bundle $E_{\mathcal{M}}$ with respect to the frames $F_{\mathcal{M}}^{p},F_{\mathcal{M}}^{q}$, respectively. Thus, we have proved the following.
\begin{prop}\label{prop1}
Let $\Omega$ be a bounded domain in $\mathbb{C}^{m}$ and $\{p_{1},\ldots,p_{t}\},\{q_{1},\ldots,q_{t}\}$ be two generators of the polynomial ideal $\mathcal{I}$ consisting of homogeneous polynomials of same degree. Also, let $\mathcal{H}\subseteq \mathcal{O}(\Omega)$ be an analytic Hilbert module and $\mathcal{M}$ be the closure of $\mathcal{I}$ in $\mathcal{H}$ with the property that $\mbox{\rm codim}V(\mathcal{M})=t$. Furthermore, assume that $F_{\mathcal{M}}^{p},
F_{\mathcal{M}}^{q}$ are the global frames of $E_{\mathcal{M}}$ obtained by applying Theorem \mbox{\rm\ref{mainthm}} on $\mathcal{M}$ with respect to the generators mentioned above. Then there exists a constant invertible matrix $A$ such that
$$\mathcal{K}_{E_{\mathcal{M}}}(F_{\mathcal{M}}^{p})=(A^{*})^{-1}\cdot
\mathcal{K}_{E_{\mathcal{M}}}(F_{\mathcal{M}}^{q})\cdot A^{*}.$$
\end{prop}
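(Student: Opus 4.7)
The plan is to combine the preceding lemma with the standard change-of-frame transformation law for the curvature of a holomorphic Hermitian bundle. All the substantive work has already been packaged into the lemma; what remains is a formal computation in the same convention $\mathcal{K}(s) = \partial\bigl(N(s)^{-1}\bar\partial N(s)\bigr)$ that was adopted in the proof of Theorem \ref{thm2}(b).

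First I would take the matrix $A=(a_{ij})$ coming from \eqref{eqn5} and invoke the preceding lemma to write
$$
\bigl[F_1^{p}(w)\ \cdots\ F_t^{p}(w)\bigr] \;=\; \bigl[F_1^{q}(w)\ \cdots\ F_t^{q}(w)\bigr]\, A^{*},\qquad w\in V(\mathcal M).
$$
Since the two holomorphic frames of $E_{\mathcal M}$ are thus related by a single, $w$-independent invertible matrix, the Grammians transform as $N_{\mathcal M}(F_{\mathcal M}^{p}) = A\cdot N_{\mathcal M}(F_{\mathcal M}^{q})\cdot A^{*}$ (with the convention $N_{ij}=\langle e_j,e_i\rangle$).

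Next I would substitute this identity into the curvature formula. Because $A$ is a \emph{constant} matrix, $\partial A = \bar\partial A = 0$ and likewise for $A^{*}$, so these factors commute with both $\partial$ and $\bar\partial$ and can be pulled outside. A short algebraic manipulation, essentially identical to the final display in the proof of Theorem \ref{thm2}(b), gives
$$
\mathcal{K}_{E_{\mathcal M}}(F_{\mathcal M}^{p}) \;=\; \partial\bigl(N_{\mathcal M}(F_{\mathcal M}^{p})^{-1}\,\bar\partial N_{\mathcal M}(F_{\mathcal M}^{p})\bigr) \;=\; (A^{*})^{-1}\cdot \mathcal{K}_{E_{\mathcal M}}(F_{\mathcal M}^{q})\cdot A^{*},
$$
which is precisely the asserted conjugation relation.

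I do not anticipate any real obstacle: the only conceptually important point is that $A$ is genuinely constant, since this is exactly what permits the factors $A$ and $A^{*}$ to be pulled through the differential operators. That constancy is guaranteed by the hypothesis that $\{p_1,\ldots,p_t\}$ and $\{q_1,\ldots,q_t\}$ consist of homogeneous polynomials of the \emph{same} degree, which, via the identification of $\{p_i(\bar D)K(\cdot,w)|_{w=0}\}$ and $\{q_j(\bar D)K(\cdot,w)|_{w=0}\}$ as two bases of $\ker D_{M^{*}}$ in the discussion preceding \eqref{eqn5}, forces the change-of-generator matrix to be a complex constant rather than a non-trivial holomorphic matrix-valued function. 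Once this constancy is in hand, the proposition is a purely mechanical consequence of the curvature transformation law.
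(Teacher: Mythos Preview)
Your proposal is correct and follows essentially the same approach as the paper: invoke the preceding lemma to get $F_{\mathcal M}^{p}=F_{\mathcal M}^{q}A^{*}$ with $A$ the constant matrix from \eqref{eqn5}, then apply the change-of-frame law $\mathcal{K}(F_{\mathcal M}^{q}A^{*})=(A^{*})^{-1}\mathcal{K}(F_{\mathcal M}^{q})A^{*}$. The paper records exactly this chain of equalities in one line; your version just spells out the Grammian step and the reason constancy of $A$ matters, which is fine.
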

\begin{cor} \label{cor:4.6}
Let $\Omega,\mathcal{I}$ be as above, $\mathcal{H},\mathcal{H'}\subseteq\mathcal{O}(\Omega)$ be two analytic Hilbert modules and $\mathcal{M},\mathcal{M'}$ be the closure of $\mathcal{I}$ in $\mathcal{H},\mathcal{H'}$ respectively with $\mbox{\rm codim}V(\mathcal{M})=\mbox{\rm codim}V(\mathcal{M'})=t$. Suppose that $F_{\mathcal{M}}^{p}:=\{F_{1}^{p},\ldots,F_{t}^{p}\}$, $F_{\mathcal{M'}}^{q}:=\{F_{1}^{'q},\ldots,F_{t}^{'q}\}$ are the global frames of $E_{\mathcal{M}},E_{\mathcal{M'}}$ corresponding to the generators $\{p_{1},\ldots,p_{t}\},\{q_{1},\ldots,q_{t}\}$, respectively. If the modules $\mathcal{M}$ and $\mathcal{M'}$ are "unitarily" equivalent, then there exists a constant invertible matrix $A$ such that
$$\mathcal{K}_{E_{\mathcal{M}}}(F_{\mathcal{M}}^{p})=(A^{*})^{-1}\cdot
\mathcal{K}_{E_{\mathcal{M'}}}(F_{\mathcal{M'}}^{q})\cdot A^{*},$$
where $\mathcal{K}_{E_{\mathcal{M}}}(F_{\mathcal{M}}^{p}),\mathcal{K}_{E_{\mathcal{M'}}}(F_{\mathcal{M'}}^{q})$ are the curvature matrices of $E_{\mathcal{M}},E_{\mathcal{M'}}$ with respect to the frames $F_{\mathcal{M}}^{p},F_{\mathcal{M'}}^{q}$, respectively.
\end{cor}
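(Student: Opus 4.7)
The plan is to obtain the statement by composing the two previous results: Theorem \ref{thm2}, which equates curvatures when \emph{the same} generating set is used on both modules, with Proposition \ref{prop1}, which relates curvatures on a \emph{single} module when two different generating sets (both homogeneous of the same degree) are employed. Neither result alone covers the mixed situation in Corollary \ref{cor:4.6}, but together they do so through an intermediate frame.

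First I would apply Theorem \ref{mainthm} on $\mathcal{M}'$ with respect to the generating set $\{p_{1},\ldots,p_{t}\}$ (rather than $\{q_{1},\ldots,q_{t}\}$) to produce an auxiliary global anti-holomorphic frame $F_{\mathcal{M}'}^{p}:=\{F_{1}^{\prime p},\ldots,F_{t}^{\prime p}\}$ of $E_{\mathcal{M}'}$ on $V(\mathcal{M}')$. This is allowed because $\mbox{codim}V(\mathcal{M}')=t$ by hypothesis, so Lemma \ref{mainlem} and Theorem \ref{mainthm} apply to $\mathcal{M}'$ with \emph{any} minimal generating set of $\mathcal{I}$, in particular $\{p_{1},\ldots,p_{t}\}$.

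Next I would invoke Theorem \ref{thm2}(b) for the pair $(\mathcal{M},\mathcal{M}')$ using the common generating set $\{p_{1},\ldots,p_{t}\}$; since $\mathcal{M}$ and $\mathcal{M}'$ are unitarily equivalent as modules, this yields
\[
\mathcal{K}_{E_{\mathcal{M}}}(F_{\mathcal{M}}^{p})=\mathcal{K}_{E_{\mathcal{M}'}}(F_{\mathcal{M}'}^{p}).
\]
Independently, I would apply Proposition \ref{prop1} inside $\mathcal{M}'$ alone with the two generating sets $\{p_{1},\ldots,p_{t}\}$ and $\{q_{1},\ldots,q_{t}\}$ of the ideal $\mathcal{I}$ (both consisting of homogeneous polynomials of the same degree, which is the hypothesis carried over from the setup preceding Proposition \ref{prop1}). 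This produces a constant invertible matrix $A=(a_{ij})$ with $q_{j}=\sum_{i}a_{ij}p_{i}$ for which
\[
\mathcal{K}_{E_{\mathcal{M}'}}(F_{\mathcal{M}'}^{p})=(A^{*})^{-1}\cdot \mathcal{K}_{E_{\mathcal{M}'}}(F_{\mathcal{M}'}^{q})\cdot A^{*}.
\]
Chaining the two displayed equalities then gives the conclusion of Corollary \ref{cor:4.6}.

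The only real subtlety — and what I expect to be the main thing to check — is that the matrix $A$ furnished by Proposition \ref{prop1} is the \emph{same} matrix that expresses the $q$'s in terms of the $p$'s as polynomials, and therefore depends only on the ideal $\mathcal{I}$ and its two chosen generating sets, not on the ambient Hilbert module. This is automatic from the derivation of $A$ in Equation \eqref{eqn5}, which takes place purely at the polynomial level (via $\ker D_{M^{*}}$ being spanned by either $\{p_{i}(\bar D)K(\cdot,w)|_{w=0}\}$ or $\{q_{j}(\bar D)K(\cdot,w)|_{w=0}\}$), so the same $A$ works whether one starts from $\mathcal{M}$ or $\mathcal{M}'$. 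Once this point is noted, the combination of Theorem \ref{thm2}(b) and Proposition \ref{prop1} is purely formal, and no further computation is required.
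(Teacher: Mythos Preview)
Your proposal is correct and follows essentially the same strategy as the paper: introduce an auxiliary frame and chain Theorem \ref{thm2}(b) with Proposition \ref{prop1}. The only cosmetic difference is that the paper builds the intermediate frame $F_{\mathcal{M}}^{q}$ on $\mathcal{M}$ (applying Proposition \ref{prop1} in $\mathcal{M}$ first, then Theorem \ref{thm2} with the $q$-generators), whereas you build $F_{\mathcal{M}'}^{p}$ on $\mathcal{M}'$ (applying Theorem \ref{thm2} with the $p$-generators first, then Proposition \ref{prop1} in $\mathcal{M}'$); the two routes are symmetric and equally valid. Your remark about $A$ depending only on the polynomial relation \eqref{eqn5} is correct but not strictly needed, since you invoke Proposition \ref{prop1} only once.
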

\begin{proof}
If we apply Theorem \ref{mainthm} on $\mathcal{M}$ with respect to the generator $\{q_{1},\ldots,q_{t}\}$, we will obtain a collection of anti-holomorphic maps $\{F_{1}^{q},\ldots,F_{t}^{q}\}$ from $V(\mathcal{M})$ to $\mathcal{M}$. This set canonically induces a global frame of $E_{\mathcal{M}}$. Let us denote the frame by $F_{\mathcal{M}}^{q}$. Then, by Proposition \ref{prop1} it follows that there exists a constant invertible matrix $A$ such that
$$\mathcal{K}_{E_{\mathcal{M}}}(F_{\mathcal{M}}^{p})=(A^{*})^{-1}
\cdot\mathcal{K}_{E_{\mathcal{M}}}(F_{\mathcal{M}}^{q})\cdot A^{*}.$$
Finally, from Theorem \ref{thm2} we obtain that
$$\mathcal{K}_{E_{\mathcal{M}}}(F_{\mathcal{M}}^{q})=\mathcal{K}_{E_{\mathcal{M'}}}(F_{\mathcal{M'}}^{q})$$
which proves the corollary.
\end{proof}

\medskip

\noindent{\textbf{Data availability}} Data sharing is not applicable to this article as no data sets were generated or analysed
during the current study.

\providecommand{\bysame}{\leavevmode\hbox to3em{\hrulefill}\thinspace}
\providecommand{\MR}{\relax\ifhmode\unskip\space\fi MR }
\providecommand{\MRhref}[2]{%
  \href{http://www.ams.org/mathscinet-getitem?mr=#1}{#2}
}
\providecommand{\href}[2]{#2}

\end{document}